\theoremstyle{definition}
\newtheorem{theorem}{Theorem}[section]
\newtheorem{corollary}[theorem]{Corollary}
\newtheorem{lemma}[theorem]{Lemma}
\newtheorem{proposition}[theorem]{Proposition}
\newtheorem{definition}[theorem]{Definition}
\newtheorem{remark}[theorem]{Remark}
\newtheorem{example}[theorem]{Example}
\newtheorem*{claim*}{Claim}
\newtheorem{alphatheorem}{Theorem}[section]
\newtheorem{alphacorollary}[alphatheorem]{Corollary}
\newcommand\cc{\ensuremath{\mathrm{c}}}
\newcommand\dd{\ensuremath{\mathrm{d}}}
\newcommand\LLL{\ensuremath{\mathbf{L}}}
\newcommand\pt{\ensuremath{\mathrm{pt}}}
\newcommand\RRR{\ensuremath{\mathbf{R}}}
\newcommand\tangent{\ensuremath{\mathrm{T}}}
\newcommand\todd{\ensuremath{\mathrm{td}}}
\DeclareMathOperator\Aut{Aut}
\DeclareMathOperator\Bl{Bl}
\DeclareMathOperator\chern{ch}
\DeclareMathOperator\Ext{Ext}
\DeclareMathOperator\Gr{Gr}
\DeclareMathOperator\rank{rank}
\DeclareMathOperator\codim{codim}
\DeclareMathOperator\HH{H}
\DeclareMathOperator\HHHH{HH}
\DeclareMathOperator\hh{h}
\DeclareMathOperator\Lie{Lie}
\DeclareMathOperator\PGL{PGL}
\DeclareMathOperator\Pois{Pois}
\DeclareMathOperator\pv{p} \DeclareMathOperator\rk{rank}
\newcommand\MM[2]{\ensuremath{\mathrm{MM}_{#1.#2}}}
\newcommand\parallelogramthreefold[6]{\begin{array}{*4{>{}c}} 1 \\ 0 & #1 \\ 0 & #2 & #3 \\ 0 & 0 & #4 & #6 \\ & 0 & #5 & 0 \\ & & 0 & 0 \\ & & & 0 \end{array}}
\newcommand\parallelogramsurface[3]{\begin{array}{*3{>{}c}} 1 \\ 0 & #1 \\ 0 & #2 & #3 \\ & 0 & 0 \\ & & 0 \end{array}}
\let\oldbigwedge\bigwedge
\renewcommand\bigwedge{\oldbigwedge\nolimits}
\renewcommand\paragraph[1]{\bgroup\setlength\parindent{0pt}\vspace{5pt}\textbf{#1.}\egroup}
\title{Polyvector fields for Fano 3-folds}
\author{Pieter Belmans}
\address{(1) Mathematisches Institut, Universit\"at Bonn, Endenicher Allee 60, 53115 Bonn, Germany (2) Departement Wiskunde en statistiek, Universiteit Antwerpen, Middelheimlaan 1, 2020 Antwerpen, Belgium, (3) Universiteit Hasselt, Agoralaan, 3590 Diepenbeek, Belgium}
\author{Enrico Fatighenti}
\address{Institut de math\'ematiques de Toulouse, Université Paul Sabatier, 118 route de Narbonne, 31062 Toulouse, France}
\author{Fabio Tanturri}
\address{Dipartimento di Matematica, Universit\`a di Genova, via Dodecaneso 35, 16146 Genova, Italy}
\begin{document}

\begin{abstract}
  We compute the Hochschild--Kostant--Rosenberg decomposition of the Hochschild cohomology of Fano 3-folds. This is the first step in understanding the non-trivial Gerstenhaber algebra structure of this invariant, and yields some initial insights in the classification of Poisson structures on Fano 3-folds of higher Picard rank.
\end{abstract}

\maketitle

\tableofcontents

\section{Introduction}
In this paper we describe the Hochschild cohomology of Fano 3-folds, with the eventual goal of understanding the interesting algebraic structures present on this invariant, and completing the classification of Poisson structures on Fano 3-folds.

\paragraph{Fano 3-folds and the vector bundle method}
Fano 3-folds were classified by Iskovskikh \cite{MR0463151,MR0503430} (for Picard rank~1, where there are 17 families)
and Mori--Mukai \cite{MR0641971,MR1102273} (for Picard rank~$\geq 2$, where there are 88 families).
This classification was obtained by understanding the \emph{birational geometry} of Fano 3-folds, and the output is a list of 105 deformation families and their numerical invariants~$\cc_1(X)^3$, $\rho(X)$, and~$\hh^{1,2}(X)$. Only~12 out of 88~families of Picard rank~$\geq 2$ are not the blowup of a Fano 3-fold of lower Picard rank.

For the Picard rank~1 case Mukai alternatively described the classification using the \emph{vector bundle method} in \cite{MR0995400}, by writing Fano 3-folds of Picard rank 1 as zero loci of vector bundles on homogeneous varieties and weighted projective spaces. In higher Picard ranks this was extended in 2 different ways, by giving a description as
\begin{enumerate}
  \item zero loci of vector bundles on GIT quotients by products of general linear groups \cite{MR3470714}; or
  \item zero loci of homogeneous vector bundles on homogeneous varieties \cite{2009.13382v1}.
\end{enumerate}
The ambient variety is often called the \emph{key variety}, and will be denoted~$F$.
In the first variation on the vector bundle method the group is often a product of tori, so that the Fano 3-fold is described as a complete intersection in a toric variety~$F$. There are~13~families for which the group is not a product of tori, and the key variety is actually a product of Grassmannians.

In the second variation the key variety~$F$ is always a homogeneous variety. In particular, every Fano 3-fold can be realised as such in a product of (possibly weighted) Grassmannians.

\paragraph{Hochschild cohomology}
We will use both these descriptions to determine the \emph{Hochschild cohomology} of all Fano 3-folds. This is an invariant which measures the deformation theory of the abelian category (or derived category) of (quasi)coherent sheaves \cite{MR2238922,MR2183254,MR2477894}. For the definition and more details on the algebraic structure on~$\HHHH^\bullet(X)$, see \cref{subsection:hochschild-cohomology}.

An important instrument in describing Hochschild cohomology for varieties is the Hochschild--Kostant--Rosenberg decomposition \cite{MR1390671,MR1940241,MR2141853}, which says that
\begin{equation}
  \label{equation:HKR-introduction}
  \HHHH^i(X)\cong\bigoplus_{p+q=i}\HH^p(X,\bigwedge^q\tangent_X).
\end{equation}

The birational description of Mori--Mukai is not convenient for automating computations of the right-hand side of \eqref{equation:HKR-introduction} for Fano 3-folds, whereas the vector bundle method turns out to be well-suited for this. Moreover, we need the combination of both descriptions to cover all Fano 3-folds, together with a separate analysis of some underdetermined cases, i.e.~cases for which the two approaches do not yield a complete description of the cohomology groups we are aiming at.

In \cref{subsection:general-results} we will show how to compute $\hh^p(X,\bigwedge^q\tangent_X)$ for $q \neq 2$: the summands with~$q=0$ and~$q=3$ are easy, and the summands for~$q=1$ follow from the knowledge of the invariants~$\mathrm{c}_1^3$, $\rho$, and $\mathrm{h}^{1,2}$ together with the size of the automorphism group of a Fano 3-fold $X$. For~$q=2$ the description is new, and forms the main subject of this paper.
\begin{alphatheorem}
  \label{theorem:main-theorem}
  Let~$X$ be a Fano 3-fold. Then the cohomology of~$\bigwedge^2\tangent_X$ is concentrated in degrees~$0,1,2$, and it is constant in families.
The dimensions of the cohomologies (for all~$q=0,1,2,3$) are given in the tables in \cref{section:values}.
\end{alphatheorem}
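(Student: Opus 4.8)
The plan is to realise every Fano 3-fold $X$ as the zero locus of a regular section of a vector bundle $E$ on a key variety $F$, using one — or, where the introduction warns it is necessary, both — of the two incarnations of the vector bundle method recalled above, and then to reduce the computation of $\HH^\bullet(X,\bigwedge^2\tangent_X)$ to cohomology computations on $F$, where $F$ is either a (product of possibly weighted) Grassmannian(s) or a toric variety and the relevant bundles are homogeneous, respectively torus-equivariant.

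First I would dispose of the concentration statement, which is independent of the case analysis. Using the duality $\bigwedge^2\tangent_X\cong\Omega^1_X\otimes\omega_X^{-1}$ on the 3-fold $X$ and the ampleness of $\omega_X^{-1}$, the Akizuki--Nakano vanishing theorem applied with $p=1$, $q=3$ (so that $p+q=4>3=\dim X$) gives $\HH^3(X,\Omega^1_X\otimes\omega_X^{-1})=0$, so the cohomology is supported in degrees $0,1,2$.

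Next, to get the individual dimensions, I would peel off the pieces of $\bigwedge^2\tangent_X$ inside $\bigwedge^2(\tangent_F|_X)$. The normal bundle sequence $0\to\tangent_X\to\tangent_F|_X\to E|_X\to0$ induces a two-step filtration of $\bigwedge^2(\tangent_F|_X)$ with graded pieces $\bigwedge^2\tangent_X$, $\tangent_X\otimes E|_X$ and $\bigwedge^2 E|_X$; the associated long exact sequences reduce everything to the cohomology of the restrictions to $X$ of the bundles $\bigwedge^2\tangent_F$, $\tangent_F\otimes E$ and $\bigwedge^2 E$ together with their subquotients. Each such restriction $G|_X$ is then computed by tensoring the Koszul resolution $0\to\bigwedge^{\rk E}E^\vee\to\cdots\to E^\vee\to\mathcal{O}_F\to\mathcal{O}_X\to0$ with $G$ and running the resulting hypercohomology spectral sequence, whose $\mathrm{E}_1$-page consists of the groups $\HH^\bullet(F,G\otimes\bigwedge^j E^\vee)$. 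On $F$ these are accessible: Bott's theorem handles the homogeneous case and standard toric combinatorics the toric one, which is exactly what makes the computation over all $105$ families automatable.

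The constancy in families would then be argued as follows. The key variety $F$ and the bundle $E$ are fixed along a deformation family — only the section $s$ varies — and the groups $\HH^\bullet(F,G\otimes\bigwedge^j E^\vee)$ entering the spectral sequences do not depend on $s$; together with the constancy of the Euler characteristic $\chi(X,\bigwedge^2\tangent_X)$ (a polynomial in the deformation-invariant Chern numbers via Hirzebruch--Riemann--Roch) and the degree-$3$ vanishing above, this pins down the generic member, and upper semicontinuity then forces equality on the remaining smooth members. The hard part will be twofold. First, controlling the differentials of the two spectral sequences, which need not degenerate and which I expect to determine by combining the Euler-characteristic bookkeeping with the vanishing ranges coming from Bott's theorem. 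Second, the underdetermined families flagged in the introduction, where neither description yields a sufficiently clean pair $(F,E)$ and the missing groups must be extracted by ad hoc arguments, for instance via an alternative model or a direct deformation-theoretic identification of $\HH^1$ and $\HH^2$.
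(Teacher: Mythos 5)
Your computational skeleton is essentially the one the paper uses: realise each family in a key variety via \cite{MR3470714} and \cite{2009.13382v1}, resolve restrictions to~$X$ by the Koszul complex of~$\mathcal{E}$, compute the ambient groups by Borel--Weil--Bott in the homogeneous case and by the Eisenbud--Musta\c{t}\u{a}--Stillman methods in the toric case, and handle the leftover families by hand; your concentration argument via Akizuki--Nakano applied to~$\Omega_X^1\otimes\omega_X^\vee$ is exactly \cref{lemma:kodaira}. The one structural difference is that the paper does \emph{not} filter~$\bigwedge^2(\tangent_F|_X)$: it uses~$\bigwedge^2\tangent_X\cong\Omega_X^1\otimes\omega_X^\vee$ and the conormal sequence \eqref{equation:conormal} twisted by~$\omega_F^\vee\otimes\det\mathcal{E}$, so the unknown sits as the cokernel of a single two-term sequence of bundles restricted from~$F$. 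Your filtration puts the unknown in the sub and forces you to also resolve~$\tangent_X\otimes\mathcal{E}|_X$ through a second sequence, i.e.\ strictly more connecting maps whose ranks depend on the section --- so you should expect \emph{more} underdetermined families than the paper's five, not fewer. Note also that for \MM{2}{1} and \MM{2}{3} one of the defining divisors is not Cartier, so no pair~$(F,\mathcal{E})$ with~$\mathcal{E}$ locally free exists in the toric model at all; the paper falls back on the Mori--Mukai blowup description together with Dolgachev's cohomology formulae on weighted projective spaces, and your ``alternative model'' escape hatch would have to do the same.

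The genuine gap is your constancy argument. Upper semicontinuity runs in the wrong direction: it permits every~$\hh^i$ to \emph{jump up} on special members, and with three unknowns~$\hh^0,\hh^1,\hh^2$ constrained only by the single equation~$\chi(X,\bigwedge^2\tangent_X)=\cc_1^3-18-\rho+\hh^{1,2}$ and the vanishing~$\hh^3=0$, a simultaneous jump of~$\hh^0$ and~$\hh^1$ (or of~$\hh^1$ and~$\hh^2$) preserves~$\chi$ and is fully compatible with semicontinuity. This is not a hypothetical failure mode: it is precisely what happens for~$\tangent_X$ in these very families, where~$\chi$ is constant but~$\hh^0(X,\tangent_X)=\dim\Aut^0(X)$ jumps (e.g.\ the Mukai--Umemura member of \MM{1}{10}), which is why the paper explicitly disclaims any abstract proof of constancy. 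What actually delivers constancy in the paper is that the zero-locus descriptions of \cref{theorem:fanosearch-description,theorem:homogeneous-description} hold for \emph{every} member of each family, the ambient Koszul groups are independent of the section, and in all but five families the diagram chase determines the answer regardless of how the section-dependent differentials act; the five remaining families are settled by descriptions (double cover, blowup, product) that are again valid family-wide, so constancy is read off case by case rather than deduced. Separately, to fill the tables at~$q=1$ you need an input you never mention: $\chi(X,\tangent_X)$ alone does not split~$\hh^0$ from~$\hh^1$, and the paper imports the classification of automorphism groups \cite{MR3776469,MR3985696} to get~$\hh^0(X,\tangent_X)=\dim\Aut^0(X)$.
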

The fact that the dimension of cohomology is constant in families is a by-product of the calculations, we don't have an abstract proof for it. Observe that the cohomology of the tangent bundle is \emph{not} constant in families, see \cite{MR3985696} for the jumping behaviour of~$\Aut^0(X)$ and therefore~$\hh^0(X,\tangent_X)=\dim\Aut^0(X)$.

\paragraph{On the methods}
In \cref{section:computation,section:by-hand} we collect the details for the proof of \cref{theorem:main-theorem}. We will set up the proof so that we can take advantage of computer algebra methods, with some explicit calculations in cases where automated methods fail. We have optimised the automated methods so that only~5/105 deformation families of Fano 3-folds need to be dealt with by hand (2 of which are nearly immediate).

What is interesting to observe is that the homogeneous methods from \cite{2009.13382v1} are very good at determining Hodge numbers (and in particular they are expected to help in classifying Fano 4-folds), with only a dozen deformation families of Fano 3-folds not being fully determined. But for \emph{twisted} Hodge numbers (and in particular the cohomology of~$\tangent_X$ and~$\bigwedge^2\tangent_X$) the homogeneous approach gives many underdetermined cases.

This is why we first use the toric description from \cite{MR3470714}, and only use the homogeneous description when no such description is available or when the toric methods are not giving a full answer. The combination of these two methods, in this particular order, gives the cleanest exposition.

\paragraph{Absence of Poisson structures}
When~$\HH^0(X,\bigwedge^2\tangent_X)$ is non-zero, the classification of Poisson structures becomes an interesting question. For a global bivector, the vanishing of the self-bracket (for the Schouten bracket) is equivalent to the Jacobi identity of the associated Poisson structure.

In \cite[\S9, Table~1]{MR3066408} Poisson structures on Fano 3-folds of Picard rank 1 were classified (see also \cite{MR2129789} for the classification of Poisson structures on smooth projective surfaces, where the vanishing is automatic).
As an immediate corollary of \cref{theorem:main-theorem} we obtain the \emph{absence} of Poisson structures on some Fano 3-folds of higher Picard rank. Here the notation~$\MM{\rho}{n}$ refers to the $n$th deformation family with Picard rank~$\rho$ in the Mori--Mukai classification, see \cite{MR0641971} and \cite[\S12.2--12.6]{MR1668579}.

\begin{alphacorollary}
  \label{corollary:no-poisson}
  The following primitive\footnote{i.e. cannot be written as the blowup in a curve of a Fano 3-fold of lower Picard rank} Fano 3-folds with~$\rho\geq 2$ admit no Poisson structures:
\begin{itemize}
    \item \MM{2}{2}
    \item \MM{2}{6}
    \item \MM{3}{1}.
  \end{itemize}
  The following imprimitive Fano 3-folds admit no Poisson structures:
  \begin{itemize}
    \item \MM{2}{4}
    \item \MM{2}{7}
    \item \MM{3}{3}.
  \end{itemize}
\end{alphacorollary}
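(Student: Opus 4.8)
The plan is to reduce the statement to the vanishing of the space of global bivector fields, which is exactly the $(p,q)=(0,2)$ summand of the Hochschild--Kostant--Rosenberg decomposition in \eqref{equation:HKR-introduction}, and then to read off this vanishing from \cref{theorem:main-theorem}. Recall that a Poisson structure on $X$ is a global section $\pi\in\HH^0(X,\bigwedge^2\tangent_X)$ whose self-bracket $[\pi,\pi]$ for the Schouten bracket vanishes, this being the geometric form of the Jacobi identity noted just above. In particular every Poisson structure is in the first place a global bivector field, so if $\HH^0(X,\bigwedge^2\tangent_X)=0$ then the only bivector is $\pi=0$ and $X$ carries no nonzero Poisson structure.

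First I would invoke \cref{theorem:main-theorem}: for every Fano 3-fold the dimension $\hh^0(X,\bigwedge^2\tangent_X)$ is constant in its deformation family and is recorded in the tables of \cref{section:values}. It then suffices to locate each of the six families in these tables and to read off that $\hh^0(X,\bigwedge^2\tangent_X)=0$. Concretely I would check this for the primitive families \MM{2}{2}, \MM{2}{6}, \MM{3}{1} and for the imprimitive families \MM{2}{4}, \MM{2}{7}, \MM{3}{3}. Since the conclusion depends only on the vanishing of a single cohomology group, the split into primitive and imprimitive families is purely organizational here and plays no role in the argument itself; it merely reflects the different geometric input used to compute these groups in the proof of \cref{theorem:main-theorem}.

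There is essentially no obstacle at the level of the corollary: all of the genuine work is contained in \cref{theorem:main-theorem}, and the present statement is a table lookup. The one point that deserves care is terminological. The zero bivector is always (trivially) a Poisson structure, so the assertion that these 3-folds ``admit no Poisson structures'' must be read as the absence of a \emph{nonzero} one; with $\HH^0(X,\bigwedge^2\tangent_X)=0$ this is immediate, and there is no need to analyse the Schouten self-bracket at all. Were the space of bivectors nonzero, the problem would become the genuinely harder one of solving the quadratic equation $[\pi,\pi]=0$ inside a positive-dimensional space, landing in $\HH^0(X,\bigwedge^3\tangent_X)$, as in the Picard rank $1$ classification of \cite{MR3066408}; but for the six families listed this difficulty does not arise.
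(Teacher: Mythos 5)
Your proposal is correct and coincides with the paper's own (implicit) argument: the paper treats \cref{corollary:no-poisson} as an immediate consequence of \cref{theorem:main-theorem}, since every Poisson structure is in particular a global bivector and the tables in \cref{section:values} show $\hh^0(X,\bigwedge^2\tangent_X)=0$ precisely for the six listed families, so the Schouten self-bracket never needs to be examined. Your remark that the primitive/imprimitive split is purely organizational, and that ``no Poisson structures'' means no nonzero ones, matches the paper's reading as well.
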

For all other Fano 3-folds there are non-zero global bivectors, and it is necessary to check the self-bracket of a global bivector field. Already for Fano 3-folds of Picard rank~1 this is a highly non-trivial condition \cite{MR3066408}. 

For the imprimitive Fano 3-folds we expect that the birational description of Mori--Mukai together with \cite[\S8]{MR1465521} should allow for a (partial) classification of Poisson structures. In particular, we expect that the second part of \cref{corollary:no-poisson} has a proof using these techniques, but this is outside the scope of the current paper.

\paragraph{Relation to other works}
In the representation theory of finite-dimensional algebras the Gerstenhaber algebra structure on Hochschild cohomology is an important invariant, studied in many cases, see \cite{
MR4186975, MR3359733, MR3748354, MR4097322} to name a few. In algebraic geometry there are (at the time of writing) fewer attempts at giving explicit descriptions of Hochschild cohomology and the Hochschild--Kostant--Rosenberg decomposition. An important case is that of partial flag varieties \cite{1911.09414v1,MR4187255}. For smooth projective toric varieties (and only the~$\HH^0$, not any possible~$\HH^{\geq1}$) one is referred to \cite{2010.07053v1}. There are also various cases where the interaction of the Hochschild cohomology of different varieties (and categories) is studied (see e.g.~\cite{MR3950704,MR3420334,MR3904800}), with the Kuznetsov components of Fano 3-folds of Picard rank 1 and index 2 being the subject of \cite[\S8.3]{0904.4330v1}.

Some of the results in this paper are standard, whilst for Fano 3-folds of Picard rank~1 results can be found in \cite{MR2290387,MR3066408}.

It would be interesting to understand how mirror symmetry can be used to compute the invariants investigated in this paper, using the symplectic geometry of the mirror Landau--Ginzburg model. For Hodge numbers (and hence Hochschild homology, see \cref{subsection:hochschild-cohomology}) of Fano varieties a recipe for this was conjectured by Katzarkov--Kontsevich--Pantev in \cite[Conjecture~3.7]{MR3592695}, based on the conjectural equivalence
\begin{equation}
  \label{equation:hms}
  \mathbf{D}^{\mathrm{b}}(X)\cong\mathrm{FS}(Y,f,\omega_Y)
\end{equation}
from homological mirror symmetry. Here~$f\colon Y\to\mathbb{A}^1$ is a (suitably compactified) Landau--Ginzburg model and~$\omega_Y$ an appropriately chosen symplectic form, so that~$X$ and~$(Y,f)$ are mirror. Subsequently this was checked by Lunts--Przyjalkowski for del Pezzo surfaces in \cite{MR3783413} and by Cheltsov--Przyjalkowski for Fano 3-folds in \cite{1809.09218v1}. Hochschild cohomology is also a categorical invariant, and therefore can be computed from either side of \eqref{equation:hms} (assuming an enhancement of the equivalence). An interesting difference is that Hodge numbers (and hence the dimensions of the Hochschild homology spaces) are constant in families, but this is not the case for Hochschild cohomology.

\paragraph{Notation}
We will number deformation families of Fano 3-folds as~\MM{\rho}{n} as in Mori--Mukai \cite{MR0641971} (see also \cite[\S12.2--12.6]{MR1668579}), with the caveat that~\MM{4}{13} refers to the blowup of~$\mathbb{P}^1\times\mathbb{P}^1\times\mathbb{P}^1$ in a curve of degree~$(1,1,3)$, the case which was originally omitted and discovered in \cite{MR1969009}.

Throughout we work over an algebraically closed field~$k$ of characteristic~0.

\paragraph{Acknowledgements}
We would like to thank Marcello Bernardara, Alexander Kasprzyk, Brent Pym and Helge Ruddat for interesting conversations. And we want to thank Alexander Kuznetsov for many conversations over the years about Fano 3-folds, and comments on an earlier version of this paper.

The first author was partially supported by the FWO (Research Foundation--Flanders). The second and third author are members of INdAM-GNSAGA.

\section{Polyector fields and their structure}
\subsection{Hochschild cohomology and the Hochschild--Kostant--Rosenberg decomposition}
\label{subsection:hochschild-cohomology}
There exist various approaches to defining the Hochschild cohomology of a variety, which are known to agree in the setting we are interested in. One of the more economical definitions is the following.
\begin{definition}
  Let~$X$ be a smooth and projective variety. Its \emph{Hochschild cohomology} is
  \begin{equation}
    \HHHH^\bullet(X)\colonequals\bigoplus_{i=0}^{2\dim X}\HHHH^i(X)
  \end{equation}
  for
  \begin{equation}
    \label{equation:hochschild-cohomology-definition}
    \HHHH^i(X)\colonequals\Ext_{X\times X}^i(\Delta_*\mathcal{O}_X,\Delta_*\mathcal{O}_X),
  \end{equation}
  where~$\Delta\colon X\hookrightarrow X\times X$ denotes the diagonal embedding.
\end{definition}

The Hochschild--Kostant--Rosenberg decomposition gives a convenient description of the summands $\HHHH^i(X)$ in terms of \emph{polyvector fields}, and it is obtained via the \emph{Hochschild--Kostant--Rosenberg quasi-isomorphism}~$\LLL\Delta^*\circ\Delta_*\mathcal{O}_X\cong\bigoplus_{i=0}^{\dim X}\Omega_X^i[i]$ considered in \cite{MR2141853,MR1940241,MR2472137}.

\begin{theorem}[Hochschild--Kostant--Rosenberg decomposition]
  \label{theorem:naive-hkr}
  Let~$X$ be a smooth projective variety. Then there exists an isomorphism
  \begin{equation}
    \HHHH^i(X)\cong\bigoplus_{p+q=i}\HH^p(X,\bigwedge^q\tangent_X)
  \end{equation}
  for~$i=0,\ldots,2\dim X$ induced by the Hochschild--Kostant--Rosenberg quasi-isomorphism.
\end{theorem}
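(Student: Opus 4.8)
The plan is to reduce the computation of $\HHHH^i(X)=\Ext^i_{X\times X}(\Delta_*\mathcal{O}_X,\Delta_*\mathcal{O}_X)$ from $X\times X$ down to $X$ by adjunction, and then feed in the Hochschild--Kostant--Rosenberg quasi-isomorphism recalled just before the statement. Since $\Delta$ is a closed immersion, the pushforward $\Delta_*$ is exact and admits the derived pullback $\LLL\Delta^*$ as a left adjoint, so that
\[
  \RRR\Hom_{X\times X}(\Delta_*\mathcal{O}_X,\Delta_*\mathcal{O}_X)\cong\RRR\Hom_X(\LLL\Delta^*\Delta_*\mathcal{O}_X,\mathcal{O}_X).
\]
Taking cohomology in degree $i$ gives $\HHHH^i(X)\cong\Ext^i_X(\LLL\Delta^*\Delta_*\mathcal{O}_X,\mathcal{O}_X)$, which moves the whole problem onto $X$.

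Next I would substitute the Hochschild--Kostant--Rosenberg quasi-isomorphism $\LLL\Delta^*\Delta_*\mathcal{O}_X\cong\bigoplus_{q=0}^{\dim X}\Omega_X^q[q]$ into the first argument. As $\RRR\Hom_X(-,\mathcal{O}_X)$ turns the finite direct sum into a direct sum and converts each shift $[q]$ into a degree shift, one obtains
\[
  \Ext^i_X\Bigl(\bigoplus_{q=0}^{\dim X}\Omega_X^q[q],\mathcal{O}_X\Bigr)\cong\bigoplus_{q=0}^{\dim X}\Ext^{i-q}_X(\Omega_X^q,\mathcal{O}_X).
\]
Because $X$ is smooth, each $\Omega_X^q=\bigwedge^q\Omega_X$ is locally free, so $\RRR\Hom_X(\Omega_X^q,\mathcal{O}_X)$ is just the dual sheaf $\bigwedge^q\tangent_X$ placed in degree $0$; hence $\Ext^{i-q}_X(\Omega_X^q,\mathcal{O}_X)\cong\HH^{i-q}(X,\bigwedge^q\tangent_X)$. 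Writing $p=i-q$ and reindexing the sum over $p+q=i$ yields the claimed decomposition.

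The genuinely hard content is not in these manipulations — it is concentrated entirely in the Hochschild--Kostant--Rosenberg quasi-isomorphism itself, i.e.~the \emph{formality} of $\LLL\Delta^*\Delta_*\mathcal{O}_X$ as an object of $\derived(X)$, which I take as given (it is precisely the statement recalled before the theorem). The remaining steps are formal, but two points deserve care: smoothness of $X$ is what lets me identify $\RRR\Hom_X(\Omega_X^q,\mathcal{O}_X)$ with the single locally free sheaf $\bigwedge^q\tangent_X$ concentrated in degree $0$, and the splitting of the complex into a direct sum of its shifted cohomology sheaves is exactly where the smoothness and characteristic $0$ hypotheses enter through the quasi-isomorphism. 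Projectivity guarantees that the adjunction is well behaved and that the resulting groups are finite-dimensional, but otherwise plays no role in the isomorphism.
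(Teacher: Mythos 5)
Your proposal is correct and follows exactly the route the paper has in mind: the paper states the theorem as a consequence of the Hochschild--Kostant--Rosenberg quasi-isomorphism~$\LLL\Delta^*\circ\Delta_*\mathcal{O}_X\cong\bigoplus_{q=0}^{\dim X}\Omega_X^q[q]$ cited from the literature, and your adjunction~$\RRR\Hom_{X\times X}(\Delta_*\mathcal{O}_X,\Delta_*\mathcal{O}_X)\cong\RRR\Hom_X(\LLL\Delta^*\Delta_*\mathcal{O}_X,\mathcal{O}_X)$ followed by dualising the locally free sheaves~$\Omega_X^q$ is precisely the standard derivation the paper leaves implicit. You also correctly locate the non-formal content in the quasi-isomorphism itself and get the shift bookkeeping~$\Ext^i(\Omega_X^q[q],\mathcal{O}_X)\cong\HH^{i-q}(X,\bigwedge^q\tangent_X)$ right.
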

Hence as a first approximation (disregarding any algebraic structures present on Hochschild cohomology) determining the Hochschild cohomology of a variety reduces to a question in sheaf cohomology.

\begin{remark}
  There is also the \emph{Hochschild homology} of~$X$, defined as
  \begin{equation}
    \HHHH_\bullet(X)\colonequals\bigoplus_{i=-\dim X}^{\dim X}\HHHH_i(X)
  \end{equation}
  where
  \begin{equation}
    \HHHH_i(X)\colonequals\Ext_{X\times X}^{i+\dim X}(\Delta_*\mathcal{O}_X,\Delta_*\omega_X).
  \end{equation}
  Moreover there is the Hochschild--Kostant--Rosenberg decomposition for Hochschild homology, which now reads
  \begin{equation}
    \HHHH_i(X)\cong\bigoplus_{p-q=i}\HH^q(X,\Omega_X^p)
  \end{equation}
  for~$i=-\dim X,\ldots,\dim X$. Hence the dimension of the Hochschild homology of~$X$ is determined by the Hodge numbers~$\hh^{p,q}=\hh^q(X,\Omega_X^p)$. These numbers admit symmetries under Serre duality and Hodge symmetry, and therefore are often written down in the form of the \emph{Hodge diamond}. In particular for Fano 3-folds the Hodge diamond is of the form
  \begin{equation}
    \label{equation:hodge-diamond-fano-3-fold}
    \begin{array}{*8{>{}c}} {\scriptstyle\HHHH_{-3}(X)} & {\scriptstyle\HHHH_{-2}(X)} & {\scriptstyle\HHHH_{-1}(X)} & {\scriptstyle\HHHH_0(X)} & {\scriptstyle\HHHH_1(X)} & {\scriptstyle\HHHH_2(X)} & {\scriptstyle\HHHH_3(X)} \\ &&& 1 \\ && 0 && 0 \\ & 0 && \rho && 0 \\ 0 && \hh^{1,2} && \hh^{1,2} && 0 \\ & 0 && \rho && 0 \\ && 0 && 0 \\ &&& 1 \end{array}
  \end{equation}
  and it is determined by the invariants from the classification. The dimensions of the Hochschild homology spaces now correspond to different columns in this diamond (as opposed to the rows which describe the dimensions of singular cohomology spaces).
\end{remark}

To mimic this economical description of the Hochschild--Kostant--Rosenberg decomposition of Hochschild homology using the Hodge diamond, the first author introduced the \emph{polyvector parallelogram}. If we denote~$\pv^{p,q}\colonequals\dim_k\HH^p(X,\bigwedge^q\tangent_X)$, then for a 3-fold it is given by
\begin{equation}
  \label{equation:parallelogram}
  \arraycolsep=10pt
  \renewcommand\arraystretch{1.2}
  \begin{array}{*5{>{}c}} {\scriptstyle\HHHH^0(X)} & 1 \\ {\scriptstyle\HHHH^1(X)} & \pv^{1,0} & \pv^{0,1} \\ {\scriptstyle\HHHH^2(X)} & \pv^{2,0} & \pv^{1,1} & \pv^{0,2} \\ {\scriptstyle\HHHH^3(X)} & \pv^{3,0} & \pv^{2,1} & \pv^{1,2} & \pv^{0,3} \\ {\scriptstyle\HHHH^4(X)} & & \pv^{3,1} & \pv^{2,2} & \pv^{1,3} \\ {\scriptstyle\HHHH^5(X)} & && \pv^{3,2} & \pv^{2,3} \\ {\scriptstyle\HHHH^6(X)} & &&& \pv^{3,3} \end{array}
\end{equation}
with an obvious generalisation to other dimensions. There are no symmetries present in the numbers~$\pv^{p,q}$, and the presentation reflects this absence.

\begin{remark}
  Another important difference between the Hodge diamond and the polyvector parallelogram is that the former is constant in families, whilst the latter is not necessarily so. We will explain this for Fano 3-folds in \cref{subsection:general-results}.
\end{remark}

\paragraph{Additional structure}
There is a rich algebraic structure on Hochschild cohomology~$\HHHH^\bullet(X)$, and on the polyvector fields~$\bigoplus_{p+q=\bullet}\HH^p(X,\bigwedge^q\tangent_X)$. Namely there exist:
\begin{itemize}
  \item a graded-commutative product (of degree 0);
  \item a graded Lie bracket (of degree $-1$)
\end{itemize}
which are related via the Poisson identity, yielding the structure of a \emph{Gerstenhaber algebra}.

On Hochschild cohomology this structure can be either induced using a localised version of the Hochschild cochain complex of an algebra \cite{MR1940241,MR1855264}, or the general machinery of Hochschild cohomology for dg~categories \cite{keller-dih}. The product corresponds to the Yoneda product on self-extensions in \eqref{equation:hochschild-cohomology-definition}, whilst the Gerstenhaber bracket~$[-,-]$ does not have a direct sheaf-theoretic interpretation in the definition \eqref{equation:hochschild-cohomology-definition}.

For polyvector fields the product structure is given by the cup product in sheaf cohomology together with the wedge product of polyvector fields, whilst the Lie bracket is given by the Schouten bracket~$[-,-]_{\mathrm{S}}$. In this case the Gerstenhaber algebra structure is even compatible with the bigrading.

The isomorphism used in \cref{theorem:naive-hkr} is \emph{not} compatible with the Gerstenhaber algebra structures on both sides. This was remedied by Kontsevich (see \cite[Claim~8.4]{MR2062626} and \cite[Theorem~5.1]{MR2141853}) for the algebra structure and Calaque--Van den Bergh \cite[Corollary~1.5]{MR2646112} for the full Gerstenhaber algebra structure, by modifying it using the square root of the Todd class. We will denote the isomorphism~$\HHHH^\bullet(X)\cong\bigoplus_{p+q=\bullet}\HH^p(X,\bigwedge^q\tangent_X)$ of graded vector spaces obtained from \cref{theorem:naive-hkr} by~$\mathrm{I}^{\mathrm{HKR}}$.
\begin{theorem}[Kontsevich, Calaque--Van den Bergh]
  We have an isomorphism of Gerstenhaber algebras
  \begin{equation}
    \mathrm{I}^{\mathrm{HKR}}\circ\sqrt{\todd_X}\wedge-\colon\bigoplus_{p+q=\bullet}\HH^p(X,\bigwedge^q\tangent_X)\overset{\cong}{\to}\HHHH^\bullet(X).
  \end{equation}
\end{theorem}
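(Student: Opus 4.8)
The plan is to deduce the statement from the globalization of Kontsevich's formality theorem, following Calaque–Van den Bergh. The idea is to realize both sides as the hypercohomology of sheaves of differential graded Lie algebras on~$X$: on the one hand the sheaf~$\mathscr{D}_{\mathrm{poly}}$ of polydifferential operators, whose hypercohomology computes~$\HHHH^\bullet(X)$ together with its Gerstenhaber bracket and cup product; on the other the sheaf~$\mathscr{T}_{\mathrm{poly}}=\bigoplus_q\bigwedge^q\tangent_X[-q]$ of polyvector fields, equipped with the Schouten bracket and the wedge product, whose hypercohomology is~$\bigoplus_{p,q}\HH^p(X,\bigwedge^q\tangent_X)$. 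The goal is then to exhibit an $L_\infty$-quasi-isomorphism (in fact a homotopy-Gerstenhaber, i.e.~$G_\infty$-quasi-isomorphism) of these sheaves whose induced map on hypercohomology is~$\mathrm{I}^{\mathrm{HKR}}\circ(\sqrt{\todd_X}\wedge-)$.

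First I would recall the local input. Kontsevich's formality theorem provides, over a formal neighbourhood or an affine chart, an $L_\infty$-quasi-isomorphism~$\mathscr{T}_{\mathrm{poly}}\to\mathscr{D}_{\mathrm{poly}}$ whose first Taylor coefficient is the Hochschild–Kostant–Rosenberg antisymmetrization map. Kontsevich's compatibility with cup products upgrades this to a morphism respecting the commutative products up to coherent homotopy, which supplies the algebra part. Since any $L_\infty$-quasi-isomorphism induces on cohomology an isomorphism compatible with the induced brackets, the Schouten bracket is transported correctly in this local, untwisted model; no Todd correction is yet visible.

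Second, and this is the crux, I would glue the local formality morphisms into a single global morphism of sheaves of dg~Lie algebras by the Fedosov/formal-geometry resolution of Dolgushev: one resolves~$\mathscr{T}_{\mathrm{poly}}$ and~$\mathscr{D}_{\mathrm{poly}}$ by their jet (Dolbeault-type) resolutions equipped with a flat Fedosov connection, applies the fibrewise formality, and descends. The Atiyah class of~$\tangent_X$ enters as the curvature datum controlling this connection, and the Chern–Weil machinery expresses its symmetric functions as the Chern classes of~$X$. The main obstacle is precisely that the first Taylor coefficient of the globalized morphism is \emph{not} the naive HKR map: the ``wheel'' graphs in Kontsevich's construction contribute a nontrivial endomorphism of~$\mathscr{T}_{\mathrm{poly}}$ which, after globalization via the Atiyah class, evaluates to contraction against the Todd class. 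Isolating this Duflo-type correction and identifying it exactly as~$\sqrt{\todd_X}\wedge-$ is the heart of the argument and the most delicate computation.

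Finally I would pass to hypercohomology. The global $G_\infty$-quasi-isomorphism induces an isomorphism of Gerstenhaber algebras~$\bigoplus_{p,q}\HH^p(X,\bigwedge^q\tangent_X)\xrightarrow{\cong}\HHHH^\bullet(X)$; rewriting its leading term using the correction identified above presents it as~$\mathrm{I}^{\mathrm{HKR}}\circ(\sqrt{\todd_X}\wedge-)$. Compatibility of the products then follows from the $G_\infty$-refinement of the formality (together with the Căldăraru-type Riemann–Roch compatibility, which is Kontsevich's Claim~8.4), while compatibility of the brackets follows from the $L_\infty$-part, yielding the full Gerstenhaber algebra isomorphism as claimed.
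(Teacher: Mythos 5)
The paper offers no proof of this statement: it is quoted directly from Kontsevich (Claim~8.4 of the deformation quantization paper) and Calaque--Van den Bergh (Corollary~1.5), and your sketch reproduces exactly the architecture of that cited proof --- local Kontsevich formality with HKR first Taylor coefficient, globalization via Fedosov-type resolutions in formal geometry with the Atiyah class as curvature datum, the wheel/Duflo-type correction identified as $\sqrt{\todd_X}\wedge-$, brackets inherited from the $L_\infty$-structure and products from the cup-product compatibility. Your roadmap is accurate and consistent with the source the paper relies on; the only attributions to tidy are that Kontsevich's morphism is an $L_\infty$-morphism compatible with cup products up to homotopy rather than a genuine $G_\infty$-morphism (the latter is Tamarkin's refinement, not what Calaque--Van den Bergh use), and that in the algebraic setting the globalization runs through Lie algebroid jets rather than Dolgushev's $C^\infty$ Dolbeault-type resolutions.
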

By describing the algebraic structure on polyvector fields we can therefore deduce properties of the algebraic structure on Hochschild cohomology of~$X$.

We can identify certain interesting substructures:
\begin{itemize}
  \item $(\HHHH^1(X),[-,-])$ is a Lie algebra,
  \item $\HHHH^i(X)$ is a representation of~$(\HHHH^1(X),[-,-])$,
  \item the self-bracket~$[\alpha,\alpha]\in\HHHH^3(X)$ for~$\alpha\in\HHHH^2(X)$ measures the obstruction to extending a first-order deformation of the abelian or derived category of coherent sheaves (classified by~$\HHHH^2(X)$, see \cite{MR2183254,MR2238922}) to higher order,
\end{itemize}
whilst on the polyvector fields and using the finer bigrading we have that:
\begin{itemize}
  \item $(\HH^0(X,\tangent_X),[-,-]_{\mathrm{S}})$ is the Lie algebra~$\Lie\Aut(X)$;
  \item $\bigoplus_{p+q=i}\HH^p(X,\bigwedge^q\tangent_X)$ is a bigraded representation of~$\Lie\Aut(X)$;
  \item the self-bracket~$[\beta,\beta]_{\mathrm{S}}\in\HH^2(X,\tangent_X)$ for~$\beta\in\HH^1(X,\tangent_X)$ measures the obstruction to extending a first-order deformation of the variety~$X$ to higher order in the Kodaira--Spencer deformation theory of varieties.
\end{itemize}
For a Fano variety the latter obstruction vanishes as~$\HH^2(X,\tangent_X)=0$ by Kodaira--Akizuki--Nakano vanishing, see also \cref{lemma:kodaira}. By \cite{MR3985696} the Lie algebra~$\Lie\Aut(X)$ is non-trivial in many cases, and it would be interesting (but outside the scope of this article) to describe this aspect of the Gerstenhaber algebra structure.

There is also the self-bracket~$[\pi,\pi]_{\mathrm{S}}\in\HH^0(X,\bigwedge^3\tangent_X)$ for~$\pi\in\HH^0(X,\bigwedge^2\tangent_X)$, which we will now elaborate on. By Kodaira vanishing~$\HH^2(X,\mathcal{O}_X)$ will play no role in this article.

\subsection{Poisson structures}
A \emph{Poisson structure} is a~$k$-bilinear operation~$\{-,-\}\colon\mathcal{O}_X\times\mathcal{O}_X\to\mathcal{O}_X$ satisfying the axioms of a Poisson bracket; in particular it satisfies the Jacobi identity. It can also be encoded \emph{globally} as a section~$\pi\in\HH^0(X,\bigwedge^2\tangent_X)$, using the equality~$\{f,g\}=\langle\dd f\wedge\dd g,\pi\rangle$ obtained from the pairing between vector fields and differential forms. The vanishing of the Schouten bracket
\begin{equation}
  \label{equation:schouten-vanishes-for-poisson}
  [\pi,\pi]_{\mathrm{S}} = 0 \in\HH^0(X,\bigwedge^3\tangent_X)
\end{equation}
encodes the Jacobi identity for the corresponding Poisson structure. We will use the following terminology.
\begin{definition}
  Let~$X$ be a smooth projective variety. A \emph{Poisson structure} on~$X$ is a bivector field~$\pi\in\HH^0(X,\bigwedge^2\tangent_X)$ such that~\eqref{equation:schouten-vanishes-for-poisson} holds. We denote
  \begin{equation}
    \Pois(X)\subseteq\HH^0(X,\bigwedge^2\tangent_X)
  \end{equation}
  the subvariety of Poisson structures.
\end{definition}
In general~$\Pois(X)$ is cut out by homogeneous equations of degree~2, and one can also consider them up to rescaling, so that one is interested in~$\mathbb{P}(\Pois(X))\subseteq\mathbb{P}(\HH^0(X,\bigwedge^2\tangent_X))$. There can be multiple irreducible components, of varying dimension. For an excellent introduction to Poisson structures, one is referred to \cite{MR3765972}. Let us just recall that Poisson structures are important to construct deformation quantisations, or noncommutative deformations, as e.g.~explained in \cite{bondal-mpi}.

The classification of Poisson structures on smooth projective surfaces is done in \cite{MR2129789}, with the vanishing of the Schouten bracket being automatic for dimension reasons. The classification of Poisson structures Fano 3-folds of Picard rank 1 is summarised in \cite[\S9, Table~1]{MR3066408}. We don't need the full classification, let us just mention the following examples.
\begin{example}
  By \cite[\S9, Table~1]{MR3066408} we have that
  \begin{itemize}
    \item for~$\mathbb{P}^3$ there are 6~irreducible components, of varying dimension;
    \item in the family \MM{1}{10} there exists a unique member for which~$\mathbb{P}(\Pois(X))$ is non-empty in~$\mathbb{P}(\HH^0(X,\bigwedge^2\tangent_X))\cong\mathbb{P}^2$, in which case it is a point: the Mukai--Umemura 3-fold~$X^{\mathrm{MU}}$ for which~$\Aut^0(X^{\mathrm{MU}})=\PGL_2$;
    \item in the family \MM{1}{9} we have for all~$X$ that~$\mathbb{P}(\Pois(X))=\emptyset$ inside~$\mathbb{P}(\HH^0(X,\bigwedge^2\tangent_X))=\pt$.
  \end{itemize}
\end{example}
As mentioned in \cite[\S3.4]{MR3765972}, the full classification of Poisson structures on Fano~3-folds of higher Picard rank is still open, and \cref{corollary:no-poisson} gives the first step towards such a classification.

\section{Computing the Hochschild cohomology of Fano 3-folds}
\label{section:computation}
In this section we discuss the aspects of the computation of Hochschild cohomology of Fano 3-folds which are common to all cases. After introducing some general results in \cref{subsection:general-results} we will set up the computation in \cref{subsection:setup} and discuss the two approaches in \cref{subsection:toric,subsection:homogeneous}. For the remaining cases one is referred to \cref{section:by-hand}.

\subsection{General results}
\label{subsection:general-results}
The following lemma is straightforward, but significantly reduces the number of cohomologies one needs to compute for a Fano~3-fold.
\begin{lemma}
  \label{lemma:kodaira}
  Let~$X$ be a Fano 3-fold. Then
  \begin{equation}
    \begin{aligned}
      \HH^\bullet(X,\mathcal{O}_X)&=\HH^0(X,\mathcal{O}_X)\cong k[0] \\
      \HH^\bullet(X,\tangent_X)&=\HH^0(X,\tangent_X)\oplus\HH^1(X,\tangent_X) \\
      \HH^\bullet(X,\bigwedge^2\tangent_X)&=\HH^0(X,\bigwedge^2\tangent_X)\oplus\HH^1(X,\bigwedge^2\tangent_X)\oplus\HH^2(X,\bigwedge^2\tangent_X) \\
      \HH^\bullet(X,\bigwedge^3\tangent_X)&=\HH^0(X,\omega_X^\vee).
    \end{aligned}
  \end{equation}
\end{lemma}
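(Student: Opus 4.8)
The plan is to reduce every statement to the Kodaira--Akizuki--Nakano vanishing theorem applied to the anticanonical bundle~$\omega_X^\vee$, which is ample precisely because~$X$ is Fano. The starting observation is the canonical identification
\[
  \bigwedge^q\tangent_X\cong\Omega_X^{3-q}\otimes\omega_X^\vee,
\]
valid on any smooth~$3$-fold: it comes from the perfect pairing~$\bigwedge^q\tangent_X\otimes\bigwedge^{3-q}\tangent_X\to\bigwedge^3\tangent_X=\omega_X^\vee$ together with the identity~$(\bigwedge^{3-q}\tangent_X)^\vee=\Omega_X^{3-q}$. This rewrites each summand~$\HH^p(X,\bigwedge^q\tangent_X)$ as a piece of twisted Hodge cohomology~$\HH^p(X,\Omega_X^{3-q}\otimes\omega_X^\vee)$, to which the standard vanishing machinery directly applies.

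First I would dispose of~$q=0,1,2$ in one stroke. The Akizuki--Nakano vanishing theorem asserts that~$\HH^p(X,\Omega_X^a\otimes L)=0$ for an ample line bundle~$L$ whenever~$a+p>\dim X=3$. Taking~$a=3-q$ and~$L=\omega_X^\vee$ gives~$\HH^p(X,\bigwedge^q\tangent_X)=0$ for all~$p>q$. For~$q=0$ this kills~$\HH^1$, $\HH^2$, $\HH^3$, leaving only~$\HH^0(X,\mathcal{O}_X)$, which equals~$k$ since~$X$ is smooth, projective and connected; for~$q=1$ it kills~$\HH^2$ and~$\HH^3$; for~$q=2$ it kills~$\HH^3$. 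These are exactly the first three lines of the statement, together with the identification~$\bigwedge^3\tangent_X=\det\tangent_X=\omega_X^\vee$ needed for the fourth.

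The remaining case is the top exterior power~$\bigwedge^3\tangent_X=\omega_X^\vee$, for which the Akizuki--Nakano range~$p>3$ is vacuous on a~$3$-fold and a separate argument is required. Here I would invoke Kodaira vanishing directly: writing~$\omega_X^\vee=\omega_X\otimes(\omega_X^\vee)^{\otimes2}$ and noting that~$(\omega_X^\vee)^{\otimes2}$ is again ample, Kodaira vanishing yields~$\HH^p(X,\omega_X^\vee)=0$ for all~$p>0$, so that only~$\HH^0(X,\omega_X^\vee)$ survives. No individual step is genuinely difficult; the only point that needs care is the bookkeeping, namely recognising that the uniform bound~$p>q$ coming from Akizuki--Nakano covers~$q\leq2$ but degenerates in the top degree~$q=\dim X$, which is precisely the case one must peel off and treat with plain Kodaira vanishing.
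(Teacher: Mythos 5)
Your proposal is correct and takes essentially the same route as the paper: both rest on the identification $\bigwedge^q\tangent_X\cong\Omega_X^{3-q}\otimes\omega_X^\vee$ and Kodaira--Akizuki--Nakano vanishing with $\mathcal{L}=\omega_X^\vee$ for $q\leq 2$. Your separate Kodaira-vanishing step for $\omega_X^\vee$ is exactly the paper's remaining application of the same theorem with $(p,\mathcal{L})=(3,\omega_X^\vee\otimes\omega_X^\vee)$, since Akizuki--Nakano in top form degree \emph{is} Kodaira vanishing.
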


\begin{proof}
  This is immediate from the Kodaira--Akizuki--Nakano vanishing
  \begin{equation}
    \HH^q(X,\mathcal{L}\otimes\Omega_X^p)=0\qquad\forall p+q>\dim X
  \end{equation}
  for an ample line bundle~$\mathcal{L}$, by considering~$(p,\mathcal{L})=(3,\omega_X^\vee),(2,\omega_X^\vee),(1,\omega_X^\vee),(3,\omega_X^\vee\otimes\omega_X^\vee)$ and using the identification~$\bigwedge^i\tangent_X\cong\omega_X^\vee\otimes\Omega_X^{3-i}$.
\end{proof}
In particular, the polyvector parallelogram introduced in \cref{subsection:hochschild-cohomology} has the form
\begin{equation}
  \label{equation:parallelogram-fano-3-fold}
  \arraycolsep=10pt
  \renewcommand\arraystretch{1.2}
  \begin{array}{*5{>{}c}} {\scriptstyle\HHHH^0(X)} & 1 \\ {\scriptstyle\HHHH^1(X)} & 0 & \pv^{0,1} \\ {\scriptstyle\HHHH^2(X)} & 0 & \pv^{1,1} & \pv^{0,2} \\ {\scriptstyle\HHHH^3(X)} & 0 & 0 & \pv^{1,2} & \pv^{0,3} \\ {\scriptstyle\HHHH^4(X)} & & 0 & \pv^{2,2} & 0 \\ {\scriptstyle\HHHH^5(X)} & && 0 & 0 \\ {\scriptstyle\HHHH^6(X)} & &&& 0 \end{array}
\end{equation}

Next we describe the Euler characteristic of the vector bundles appearing in \cref{lemma:kodaira}.
Recall that Hirzebruch--Riemann--Roch for a vector bundle~$\mathcal{E}$ on a 3-fold takes on the following form, where we abbreviate~$\cc_i=\cc_i(\tangent_X)$:
\begin{equation}
  \label{equation:hrr-fano-3-fold}
  \begin{aligned}
    \chi(X,\mathcal{E})
    &=\int_X\chern(\mathcal{E})\todd_X \\
    &=\frac{1}{24}\rk(\mathcal{E})\cc_1\cc_2 + \frac{1}{12}\cc_1(\mathcal{E})\left( \cc_1^2 + \cc_2 \right) + \frac{1}{4}\left( \cc_1(\mathcal{E})^2 - 2\cc_2(\mathcal{E}) \right)\cc_1 \\
    &\qquad + \frac{1}{6}\left( \cc_1(\mathcal{E})^3 - 3\cc_1(\mathcal{E})\cc_2(\mathcal{E}) + 3\cc_3(\mathcal{E}) \right).
  \end{aligned}
\end{equation}
We obtain the following identities, expressing the Euler characteristic of the bundles we are interested in in terms of the usual invariants~$\rho$, $\hh^{1,2}$ and~$\cc_1^3$ in the classification of Fano 3-folds.
\begin{lemma}
  \label{lemma:hrr-computations}
  Let~$X$ be a Fano 3-fold.
  \begin{align}
    \chi(X,\tangent_X)
    &=\frac{1}{2}\cc_1^3 + \rho - 18 - \hh^{1,2};
    \label{equation:hrr-tangent} \\
    \chi(X,\bigwedge^2\tangent_X)
    &=\cc_1^3 - 18 - \rho + \hh^{1,2};
    \label{equation:hrr-bivectors} \\
    \chi(X,\omega_X^\vee)
    &=\frac{1}{2}\cc_1^3+3.
    \label{equation:anticanonical}
  \end{align}
\end{lemma}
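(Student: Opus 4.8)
The only tool needed is the Hirzebruch--Riemann--Roch formula \eqref{equation:hrr-fano-3-fold}, so the plan is essentially computational: for each of the three bundles I would first express its Chern classes in terms of $\cc_1,\cc_2,\cc_3$ of $\tangent_X$, substitute into \eqref{equation:hrr-fano-3-fold}, and then collapse the answer onto the classical invariants using two universal identities that hold for every Fano 3-fold. The substitution always produces an expression of the shape $\alpha\int_X\cc_1^3+\beta\int_X\cc_1\cc_2+\gamma\int_X\cc_3$, so once those two identities are available the three formulas drop out by elementary arithmetic with the coefficients $\alpha,\beta,\gamma$.

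The two identities I would establish first are the following. Applying \eqref{equation:hrr-fano-3-fold} to $\mathcal{E}=\mathcal{O}_X$ gives $\chi(X,\mathcal{O}_X)=\frac{1}{24}\int_X\cc_1\cc_2$, and since $\chi(X,\mathcal{O}_X)=1$ by \cref{lemma:kodaira}, this yields $\int_X\cc_1\cc_2=24$. For the second, $\int_X\cc_3$ is the topological Euler characteristic $e(X)=\sum_i(-1)^ib_i(X)$; the Hodge diamond \eqref{equation:hodge-diamond-fano-3-fold} of a Fano 3-fold has $b_1=b_5=0$, $b_2=b_4=\rho$ and $b_3=2\hh^{1,2}$ (the vanishing $\hh^{3,0}=\hh^0(X,\omega_X)=0$ being where ampleness of $\omega_X^\vee$ enters), so $\int_X\cc_3=2+2\rho-2\hh^{1,2}$.

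It then remains to record the Chern classes of the three bundles. For $\tangent_X$ there is nothing to do, and for the line bundle $\omega_X^\vee=\det\tangent_X$ one has $\cc_1(\omega_X^\vee)=\cc_1$ and $\cc_2(\omega_X^\vee)=\cc_3(\omega_X^\vee)=0$. For $\bigwedge^2\tangent_X$ I would use the splitting principle: writing $a_1,a_2,a_3$ for the Chern roots of $\tangent_X$, the roots of $\bigwedge^2\tangent_X$ are the sums $a_i+a_j=\cc_1-a_k$, whence $\cc_1(\bigwedge^2\tangent_X)=2\cc_1$, $\cc_2(\bigwedge^2\tangent_X)=\cc_1^2+\cc_2$ and $\cc_3(\bigwedge^2\tangent_X)=\cc_1\cc_2-\cc_3$. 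Substituting each triple into \eqref{equation:hrr-fano-3-fold} and applying the two identities yields \eqref{equation:hrr-tangent}, \eqref{equation:anticanonical} and \eqref{equation:hrr-bivectors} respectively.

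I do not expect a genuine obstacle here: the argument is bookkeeping with Chern numbers, and the coefficient arithmetic is routine, the integer constants $-18$ and $+3$ in the answers arising from the substitutions $\int_X\cc_1\cc_2=24$ and $\int_X\cc_3=2+2\rho-2\hh^{1,2}$. The one step worth stating with care is the second identity, as it is the only place where the geometry of Fano 3-folds genuinely enters, through the shape of the Hodge diamond and the vanishing of $\hh^{3,0}$ and $\hh^{1,0}$.
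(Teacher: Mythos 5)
Your proposal is correct and follows essentially the same route as the paper: both proofs pin down $\int_X\cc_1\cc_2=24$ via Hirzebruch--Riemann--Roch for $\mathcal{O}_X$ together with Kodaira vanishing, identify $\int_X\cc_3=2+2\rho-2\hh^{1,2}$ as the topological Euler characteristic read off the Fano Hodge diamond, and then substitute into \eqref{equation:hrr-fano-3-fold} for the three bundles. The only (cosmetic) difference is the bookkeeping for $\bigwedge^2\tangent_X$: you extract $\cc_i(\bigwedge^2\tangent_X)=\left(2\cc_1,\ \cc_1^2+\cc_2,\ \cc_1\cc_2-\cc_3\right)$ by the splitting principle, whereas the paper computes $\chern(\bigwedge^2\tangent_X)=\chern(\Omega_X^1)\chern(\omega_X^\vee)$ using $\bigwedge^2\tangent_X\cong\Omega_X^1\otimes\omega_X^\vee$ --- both yield the same intermediate identity $\chi(X,\bigwedge^2\tangent_X)=\cc_1^3-\frac{17}{24}\cc_1\cc_2-\frac{1}{2}\cc_3$ and hence the stated formulas.
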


\begin{proof}
  By \eqref{equation:hrr-fano-3-fold} for~$\mathcal{O}_X$ and Kodaira vanishing we have that~$\chi(X,\mathcal{O}_X)=\frac{\cc_1\cc_2}{24}=1$, so
  \begin{equation}
    \cc_1\cc_2=24.
  \end{equation}
  And~$\cc_3$ is the topological Euler characteristic, so
  \begin{equation}
    \cc_3=2+2\rho-2\hh^{1,2}.
  \end{equation}
  Hence \eqref{equation:hrr-tangent} and \eqref{equation:anticanonical} follow from \eqref{equation:hrr-fano-3-fold}.

  For \eqref{equation:hrr-bivectors} we use that
  \begin{equation}
    \begin{aligned}
      \chern(\bigwedge^2\tangent_X)
      &=\chern(\Omega_X^1)\chern(\omega_X^\vee) \\
      &=\chern(\tangent_X^\vee)\chern(\omega_X^\vee) \\
      &=\left( 3 - \cc_1 + \frac{1}{2}(\cc_1^2 - 2\cc_2) + \frac{1}{6}(-\cc_1^3 + 3\cc_1\cc_2 - 3\cc_2) \right)\left( 1 + \cc_1 + \frac{1}{2}\cc_1^2 + \frac{1}{6}\cc_1^3 \right) \\
      &=3 + 2\cc_1 + \cc_1^2 - \cc_2 + \frac{1}{3}\cc_1^3 - \frac{1}{2}\cc_1\cc_2 - \frac{1}{2}\cc_3
    \end{aligned}
  \end{equation}
  so that reading off the degree three part of~$\chern(\bigwedge^2\tangent_X)\todd_X$ gives
  \begin{equation}
    \chi(X,\bigwedge^2\tangent_X)=\cc_1^3 - \frac{17}{24}\cc_1\cc_2 - \frac{1}{2}\cc_3
  \end{equation}
  and the identity in \eqref{equation:hrr-bivectors} follows from the observations made in the previous paragraph.
\end{proof}

This observation, together with the classification of infinite automorphism groups of Fano~3-folds (see \cite[Theorem~1.1.2]{MR3776469} for Picard rank~1, and \cite[Theorem~1.2]{MR3985696} for Picard rank~$\geq2$), makes it straightforward to determine~$\hh^0(X,\tangent_X)$ and~$\hh^1(X,\tangent_X)$.
\begin{proposition}
  Let~$X$ be a Fano 3-fold. We have that
  \begin{equation}
    \begin{aligned}
      \hh^0(X,\tangent_X)&=\dim\Aut^0(X); \\
      \hh^1(X,\tangent_X)&=-\left( \frac{1}{2}\cc_1^3 + \rho - 18 - \hh^{1,2} \right)-\dim\Aut(X).
    \end{aligned}
  \end{equation}
\end{proposition}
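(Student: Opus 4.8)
The plan is to dispatch the two equalities separately, since each rests on a different standard input and neither requires new computation. For $\hh^0$ I would invoke the classical identification of global vector fields with infinitesimal automorphisms: the automorphism functor of a smooth projective variety is representable by a group scheme $\Aut(X)$ whose Lie algebra is canonically $\HH^0(X,\tangent_X)$, the identification $\HH^0(X,\tangent_X)=\Lie\Aut(X)$ already recorded in \cref{subsection:hochschild-cohomology}. Over a field of characteristic $0$ this group scheme is smooth, so $\dim_k\Lie\Aut(X)=\dim\Aut(X)=\dim\Aut^0(X)$, the last equality because the Lie algebra only sees the identity component. This yields $\hh^0(X,\tangent_X)=\dim\Aut^0(X)$ with no further work.

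For $\hh^1$ the idea is to read it off from the Euler characteristic, exploiting that all higher cohomology vanishes. By \cref{lemma:kodaira} we have $\HH^i(X,\tangent_X)=0$ for $i\geq2$, so the alternating sum collapses to $\chi(X,\tangent_X)=\hh^0(X,\tangent_X)-\hh^1(X,\tangent_X)$. Rearranging for $\hh^1$ and substituting the value $\chi(X,\tangent_X)=\frac{1}{2}\cc_1^3+\rho-18-\hh^{1,2}$ from \eqref{equation:hrr-tangent}, together with $\hh^0(X,\tangent_X)=\dim\Aut(X)$ from the previous paragraph, then produces the stated formula.

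There is no genuine obstacle here: both ingredients---the Kodaira--Akizuki--Nakano vanishing of $\HH^{\geq2}(X,\tangent_X)$ and the Hirzebruch--Riemann--Roch value of $\chi(X,\tangent_X)$---are already in place from \cref{lemma:kodaira,lemma:hrr-computations}, while the actual dimensions of the automorphism groups are imported from the classifications of \cite{MR3776469,MR3985696}. The only point demanding attention is the elementary sign bookkeeping when solving $\chi=\hh^0-\hh^1$ for $\hh^1$; beyond that the proposition is a direct repackaging of the two preceding lemmas.
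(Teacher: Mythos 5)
Your proof is correct and is precisely the paper's (implicit) argument: the proposition is stated there without a separate proof, as an immediate consequence of \cref{lemma:kodaira} (vanishing of $\HH^{\geq 2}(X,\tangent_X)$), the Euler characteristic \eqref{equation:hrr-tangent}, and the identification $\HH^0(X,\tangent_X)=\Lie\Aut(X)$ combined with the automorphism classifications of \cite{MR3776469,MR3985696}. One remark on your ``sign bookkeeping'': solving $\chi(X,\tangent_X)=\hh^0(X,\tangent_X)-\hh^1(X,\tangent_X)$ gives $\hh^1(X,\tangent_X)=-\chi(X,\tangent_X)+\dim\Aut(X)$, so your derivation in fact produces $+\dim\Aut(X)$ rather than the $-\dim\Aut(X)$ appearing in the displayed statement, whose sign is a typo (for $X=\mathbb{P}^3$ one has $\chi(X,\tangent_X)=15=\dim\Aut(X)$, and the printed formula would give $\hh^1=-30$ instead of the correct $0$), and your argument silently yields the correct sign.
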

The computation of~$\Aut^0(X)$ can be found in \cite[Table~1]{MR3985696}. It is important to note that the dimension of~$\Aut(X)$ can \emph{vary in families}.

For~$\bigwedge^2\tangent_X$ we need to determine 3 possibly non-zero cohomologies, and none is known a priori. Some cases are easy (e.g.~for toric Fano 3-folds Bott--Steenbrink--Danilov vanishing, see e.g.~\cite[Theorem~2.4]{MR1916637}, yields that~$\HH^{\geq 1}(X,\bigwedge^i\tangent_X)=0$) but others take more effort.

\subsection{Setting up the computation}
\label{subsection:setup}
As discussed in the previous section, it suffices to compute the cohomology of~$\bigwedge^2\tangent_X$ to fully determine the Hochschild cohomology of a Fano 3-fold. By \cref{lemma:kodaira} we know that its cohomology is concentrated in degrees~$0,1,2$.

To perform this computation we will use suitable descriptions of Fano 3-folds~$X$ inside key varieties~$F$ provided in \cite{2009.13382v1,MR3470714}. A key variety will be either a product of Grassmannians or a toric variety. In the former case~$X$ is given as the zero locus of a general global section of a homogeneous vector bundle~$\mathcal{E}$ on~$F$. In the latter case~$X$ is given as an intersection of divisors inside a possibly singular~$F$. It turns out that this second description involves non-Cartier divisors only for~\MM{2}{1} and \MM{2}{3}: this will lead us to deal with these two cases separately in \cref{section:by-hand}.

The two methods outlined in this section allow for a near uniform treatment using computer algebra methods. We implemented them using Macaulay2 \cite{M2} and Magma \cite{MR1484478}; our code is publicly available at \cite{code} and can be used to check our computations. As it turns out, this automated treatment leaves the cohomology of~$\bigwedge^2\tangent$ underdetermined for only 5~Fano 3-folds, which require additional computations by hand (2 of which straightforward). These cases will be treated in \cref{section:by-hand}.

\begin{remark}
  For many deformation families of Fano 3-folds one can of course envision alternative methods, e.g.~using descriptions as a blowup, double cover or product. We will not discuss the details for these alternative methods as they do not allow for an automated approach. One potential benefit (for certain applications) of these methods could be that they give a more intrinsic description of the cohomology. Let us just point out that they are used for 5 explicit instances in~\cref{section:by-hand}. \end{remark}

\paragraph{Setup and notation}
Let us introduce some notation, which is also the notation we use in (the documentation of) the ancillary code. Let~$X$ be a Fano 3-fold (not of type \MM{2}{1} or \MM{2}{3}), defined by the vanishing of a global section of a vector bundle~$\mathcal{E}$ inside a key variety~$F$ with $\codim_F X = \rank \mathcal{E}$. By \cref{theorem:fanosearch-description,theorem:homogeneous-description} the key variety can be chosen as either a product of Grassmannians or a (possibly singular) toric variety. We wish to compute the cohomology of
\begin{equation}
  \bigwedge^2\tangent_X\cong\Omega_X^1\otimes\omega_X^\vee.
\end{equation}
We will do this by using the conormal sequence, using that the ideal sheaf~$\mathcal{I}$ cutting out~$X$ gives~$(\mathcal{I}/\mathcal{I}^2)|_X\cong\mathcal{E}^\vee|_X$. Since $X$ is smooth and locally complete intersection within~$F$, one has that $X \subset F^{\text{sm}}$, hence $\Omega_F^1|_X$ is locally free. From \cite[Tags 06AA and 0B3P]{stacks-project} it follows that the conormal sequence
\begin{equation}
  \label{equation:conormal}
  0\to\mathcal{E}^\vee|_X\to\Omega_F^1|_X\to\Omega_X^1\to 0
\end{equation}
is an exact sequence of vector bundles on $X$. We will twist this sequence by the anticanonical bundle~$\omega_X^\vee\cong\omega_F^\vee|_X\otimes\det\mathcal{E}|_X$.
We are interested in computing the cohomologies of the last term of
\begin{equation}
  \label{equation:conormal-twisted}
  0\to(\mathcal{E}^\vee \otimes \omega_F^\vee\otimes\det\mathcal{E})|_X \to(\Omega_F^1 \otimes \omega_F^\vee\otimes\det\mathcal{E})|_X\to\Omega_X^1 \otimes (\omega_F^\vee\otimes\det\mathcal{E})|_X\to 0.
\end{equation}
The first two terms can be resolved by suitable twists of the Koszul complex
\begin{equation}
  \label{equation:koszul}
  0\to\det\mathcal{E}^\vee\to\bigwedge^{\rk\mathcal{E}-1}\mathcal{E}^\vee\to\ldots\to\mathcal{E}^\vee\to\mathcal{O}_F\to\mathcal{O}_X\to 0.
\end{equation}
The whole point of this reduction is that the tensor product of~$\bigwedge^i\mathcal{E}^\vee$ with either of the first two bundles from \eqref{equation:conormal-twisted} can now be expressed in terms of vector bundles on~$F$ for which good computational methods exist:
\begin{itemize}
  \item for toric varieties we can use the work of Eisenbud--Musta\c{t}\u{a}--Stillman \cite{MR1769656}, as implemented in \cite{NormalToricVarieties}, even when the cotangent sheaf is not locally free;
  \item for homogeneous varieties we can use the Borel--Weil--Bott theorem.
\end{itemize}

\subsection{Complete intersections in toric varieties}
\label{subsection:toric}
The majority of the cases will be covered by this method. The starting point is the following theorem, which follows from the case-by-case analysis performed in \cite{MR3470714} for Picard ranks~$2,\ldots,5$, whilst for Picard ranks~$1, 6,\ldots,10$ it follows from the description using weighted projective spaces and del Pezzo surfaces.
\begin{theorem}[Coates--Corti--Galkin--Kasprzyk]
  \label{theorem:fanosearch-description}
  Let~$X$ be a Fano 3-fold. Assume its deformation family is not of type
  \begin{description}
    \item[Picard rank 1] \MM{1}{5}, \MM{1}{6}, \MM{1}{7}, \MM{1}{8}, \MM{1}{9}, \MM{1}{10}, \MM{1}{15};
    \item[Picard rank 2] \MM{2}{14}, \MM{2}{17}, \MM{2}{20}, \MM{2}{21}, \MM{2}{22}, \MM{2}{26}.
  \end{description}
  Then~$X$ has a description as a complete intersection of codimension at most~3\ in a projective toric variety~$F$. Moreover we have that
  \begin{itemize}
    \item $F$ is singular if the deformation family of~$X$ is of type
      \begin{description}
        \item[Picard rank 1] \MM{1}{1}, \MM{1}{11}, \MM{1}{12};
        \item[Picard rank 2] \MM{2}{1}, \MM{2}{2}, \MM{2}{3}, \MM{2}{8};
        \item[Picard rank 3] \MM{3}{1}, \MM{3}{4}, \MM{3}{14}, \MM{3}{16};
        \item[Picard rank 4] \MM{4}{5};
        \item[Picard rank 5] \MM{5}{1};
        \item[Picard rank 9] \MM{9}{1};
        \item[Picard rank 10] \MM{10}{1};
      \end{description}
    \item $X$ is the intersection of Cartier divisors if its deformation family is \emph{not} of type
      \begin{description}
        \item[Picard rank 2] \MM{2}{1}, \MM{2}{3}.
      \end{description}
  \end{itemize}
\end{theorem}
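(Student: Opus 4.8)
The statement is a compilation over the Mori--Mukai classification, so the plan is to establish its three assertions---existence of a toric complete-intersection model, singularity of the ambient~$F$, and Cartier-ness of the cutting divisors---one deformation family at a time. For the existence of a model
\[
  X=D_1\cap\cdots\cap D_c\subset F,\qquad c=\codim_F X\leq 3,
\]
with~$F$ a projective toric variety, the Picard rank~$2,\ldots,5$ cases are exactly the output of the case-by-case search of Coates--Corti--Galkin--Kasprzyk, so here I would simply import their tables from \cite{MR3470714}. For the non-excluded Picard rank~$1$ families I would use the classical realisations as (weighted) complete intersections in a single (weighted) projective space, and for Picard ranks~$6,\ldots,10$ the descriptions as products~$S\times\mathbb{P}^1$ with~$S$ a del Pezzo surface (and closely related constructions), each of which inherits a toric complete-intersection model from the corresponding model of~$S$. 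The families removed in the hypothesis are precisely those whose key variety must be a product of Grassmannians, and are instead covered by the homogeneous description of \cref{theorem:homogeneous-description}.

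With a model in hand, deciding whether~$F$ is singular is a purely combinatorial check on its fan: $F$ is smooth if and only if every maximal cone is generated by part of a~$\mathbb{Z}$-basis of the cocharacter lattice, a condition which in the (weighted) projective models reduces to asking that all weights equal~$1$. Running this check against each model and matching the output against the list is then routine; the weighted models~$\MM{1}{1}$, $\MM{1}{11}$, $\MM{1}{12}$ and the weighted del Pezzo factors underlying~$\MM{9}{1}$ and~$\MM{10}{1}$ are typical sources of singularity, alongside the genuinely singular toric ambients arising in the higher-rank families.

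For the final assertion I would use that a torus-invariant Weil divisor~$D=\sum_\rho a_\rho D_\rho$ on~$F$ is Cartier precisely when, on each maximal cone~$\sigma$, its coefficients are prescribed by a single character~$m_\sigma\in M$; equivalently~$\mathcal{O}_F(D)$ is locally free exactly when~$[D]$ lies in the subgroup~$\mathrm{Pic}(F)\subseteq\mathrm{Cl}(F)$. When~$F$ is smooth this holds automatically, so the test is only needed for the singular families, and there I would compare the classes of the defining divisors against this criterion. The plan is then to verify that in every singular family but two the defining classes are Cartier, while for~$\MM{2}{1}$ and~$\MM{2}{3}$ at least one defining sheaf~$\mathcal{O}_F(D_i)$ fails to be locally free---this is exactly what excludes these families from the uniform treatment of \cref{subsection:setup} and sends them to \cref{section:by-hand}. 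I expect this Cartier analysis to be the main obstacle: it requires the precise fan and divisor-class data of each singular ambient and a careful cone-by-cone computation, whereas the existence and smoothness steps are either imported wholesale from \cite{MR3470714} or reduced to inspection of weights.
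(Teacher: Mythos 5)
Your proposal is correct and follows essentially the same route as the paper, which proves the statement simply by importing the case-by-case analysis of \cite{MR3470714} for Picard ranks~$2,\ldots,5$ and the classical descriptions via weighted projective spaces and del Pezzo surfaces for ranks~$1,6,\ldots,10$, with the singularity and Cartier assertions left as routine toric verifications (carried out in the ancillary code). Your explicit fan-smoothness and cone-by-cone Cartier criteria are exactly the checks the paper leaves implicit, including the identification of \MM{2}{1} and \MM{2}{3} as the only families cut out by non-Cartier divisors.
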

So 90 (resp.~92) out of~105~deformation families admit a description in terms of a toric variety~$F$ and a vector bundle~$\mathcal{E}$ (resp.~reflexive sheaf) so that we can use the combination of the Koszul sequence and the conormal sequence. We will restrict ourselves to the case where~$\mathcal{E}$ is a vector bundle, and we will deal with the ~2~remaining cases \MM{2}{1}, \MM{2}{3} using birational methods in \cref{section:by-hand}. We remark that it is certainly possible to find suitable models for them as complete intersections of Cartier divisors in different toric varieties, but we did not manage to fully determine the cohomology of $\bigwedge^2\tangent_X$ in this way.

In \cref{table:toric-description-overview} we give an overview of the codimension of~$X$ in~$F$, and whether the computational methods can give a fully determined answer for the cohomology of $\bigwedge^2\tangent_X$.
\begin{itemize}
  \item The case \MM{1}{1} can be easily determined from the toric computation together with Kodaira vanishing, see \cref{proposition:1-1}.
\item The case \MM{4}{13} can be computed using the description as a blowup, see \cref{proposition:4-13}.
  \item The case \MM{10}{1} readily follows from applying the K\"unneth formula to~$\mathbb{P}^1\times\mathrm{dP}_8$ (\cref{proposition:10-1}). For \MM{9}{1} a similar argument using $\mathbb{P}^1\times\mathrm{dP}_7$ holds, but we chose to use its description as a homogeneous zero locus, see \cref{table:homogeneous-descriptions}.
\end{itemize}

\begin{table}
  \centering
  \begin{tabular}{ccc}
    \toprule
    type & number of deformation families & underdetermined cases \\\midrule
    toric Fano 3-fold & 18 & none \\
    toric hypersurface & 53 & \MM{1}{1}, \MM{4}{13}, \MM{9}{1} and \MM{10}{1} \\
    toric codimension 2 & 15 & none \\
    toric codimension 3 & 4 & none \\
    \midrule
    total & 90 \\
    \bottomrule
  \end{tabular}
  \caption{Overview of Fano 3-folds with a toric description as a complete intersection of Cartier divisors}
  \label{table:toric-description-overview}
\end{table}

\begin{remark}
  The description in \cite{MR3470714} describes~$F$ as the GIT quotient of an affine space by a torus. To compute cohomology of coherent sheaves on the toric variety~$F$ we need to translate this description to a toric fan, and describe the divisors cutting out~$X$ in this language. See \cite[\S C]{MR3470714} for some background.
\end{remark}

\paragraph{An example: \MM{2}{8}}
We now describe an example of a toric complete intersection, and the different steps in the computation. We will consider the deformation family \MM{2}{8}, whose Mori--Mukai description is given by
\begin{enumerate}
  \item a double cover of $\Bl_p\mathbb{P}^3$ with anticanonical branch locus~$B$ such that~$B\cap E$ is smooth,
  \item a double cover of $\Bl_p\mathbb{P}^3$ with anticanonical branch locus~$B$ such that~$B\cap E$ is singular but reduced,
\end{enumerate}
where~$E$ denotes the exceptional divisor of the blowup~$\Bl_p\mathbb{P}^3\to\mathbb{P}^3$, and the second is a specialisation of the first.
\begin{proposition}
  \label{proposition:2-8}
  Let~$X$ be a Fano 3-fold in the deformation family~\MM{2}{8}. Then we have that~$\hh^i(X,\bigwedge^2\tangent_X)=3,1,1$ for~$i=0,1,2$.
\end{proposition}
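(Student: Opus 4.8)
The plan is to follow the general strategy set up in \cref{subsection:setup}, applying the toric computation machinery to the family \MM{2}{8}. Since this family admits a description as a complete intersection of Cartier divisors in a toric variety~$F$ (as \MM{2}{8} is listed among the singular toric cases but \emph{not} among the non-Cartier exceptions \MM{2}{1}, \MM{2}{3}), we have at our disposal the vector bundle~$\mathcal{E}$ and the ambient~$F$. First I would translate the GIT-quotient description from \cite{MR3470714} into an explicit toric fan, recording the Cox ring grading and the divisor classes~$[\mathcal{E}]$ cutting out~$X$, so that~$\codim_F X = \rank\mathcal{E}$ and~$\omega_X^\vee \cong \omega_F^\vee|_X \otimes \det\mathcal{E}|_X$ are both computable.

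\emph{The main computation} proceeds via the twisted conormal sequence~\eqref{equation:conormal-twisted}. The goal is the cohomology of~$\Omega_X^1 \otimes (\omega_F^\vee\otimes\det\mathcal{E})|_X \cong \bigwedge^2\tangent_X$, so I would compute the cohomology of the two left-hand terms, $(\mathcal{E}^\vee \otimes \omega_F^\vee\otimes\det\mathcal{E})|_X$ and $(\Omega_F^1 \otimes \omega_F^\vee\otimes\det\mathcal{E})|_X$, and then read off the answer from the induced long exact sequence. Each restriction to~$X$ is resolved by tensoring the Koszul complex~\eqref{equation:koszul} with the relevant bundle on~$F$, reducing everything to the cohomology of explicit vector bundles on the toric~$F$. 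Here the Eisenbud--Musta\c{t}\u{a}--Stillman methods \cite{MR1769656} (via \cite{NormalToricVarieties}) furnish all the sheaf cohomology groups on~$F$, even where~$\Omega_F^1$ is only a reflexive sheaf on the singular ambient space. Assembling the hypercohomology spectral sequences of the two twisted Koszul resolutions yields the cohomology of the two subbundles.

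\emph{The hard part} will be controlling the connecting maps in the long exact sequence of~\eqref{equation:conormal-twisted}, since the automated toric computation returns dimensions of the individual cohomology groups but not always the ranks of the boundary maps between them. A priori the Euler characteristic~$\chi(X,\bigwedge^2\tangent_X)$ is pinned down by~\eqref{equation:hrr-bivectors}, and \cref{lemma:kodaira} guarantees the cohomology is concentrated in degrees~$0,1,2$; combining these constraints with the computed dimensions from the spectral sequences should force the individual~$\hh^i$. If the naive toric output leaves the three numbers underdetermined, I would expect to need an auxiliary input --- but since \MM{2}{8} is \emph{not} listed among the four underdetermined toric-hypersurface cases in \cref{table:toric-description-overview} (nor is it a hypersurface at all, having higher codimension), I anticipate the method to terminate cleanly, yielding~$\hh^i(X,\bigwedge^2\tangent_X)=3,1,1$ for~$i=0,1,2$ directly. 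As a consistency check I would verify that these values satisfy~$3-1+1 = 3 = \chi(X,\bigwedge^2\tangent_X)$ as computed from~\eqref{equation:hrr-bivectors} using the invariants of \MM{2}{8}.
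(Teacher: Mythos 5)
Your plan is essentially the paper's proof: the paper likewise takes the toric description from \cite[\S25]{MR3470714}, twists the conormal sequence as in \eqref{equation:conormal-twisted}, resolves the first two terms by the twisted Koszul sequence, computes the four relevant cohomology groups on~$F$ by toric methods, and finishes with a diagram chase --- which here is unambiguous, since each of the four groups on~$F$ turns out to be concentrated in a single degree, so the connecting maps you worry about never matter, and the result $\hh^i(X,\bigwedge^2\tangent_X)=3,1,1$ indeed satisfies $\chi=3$ from \eqref{equation:hrr-bivectors}. One factual correction: \MM{2}{8} \emph{is} a toric hypersurface (the paper realises~$X$ as a divisor in $|2L+2M|$ on a singular toric 4-fold, so the Koszul resolution \eqref{equation:koszul} collapses to a single short exact sequence); your parenthetical claim that it has higher codimension is wrong, though harmless to the argument.
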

By \cite[\S25]{MR3470714} the GIT description of the toric key variety~$F$ for~$X$ is given by the weights
\begin{center}
  \begin{tabular}{c|cccccc}
    $L$ & $1$ & $1$ & $1$ & $-1$ & $0$ & $1$ \\
    $M$ & $0$ & $0$ & $0$ & $1$ & $1$ & $1$
  \end{tabular}
\end{center}
so that the nef cone of~$F$ is spanned by~$L$ and~$L+M$. Then~$X$ is a divisor in the linear system~$|2L+2M|$. Translating this to a description using the set of rays~$R$ and the set of cones~$C$ gives
\begin{equation}
  \begin{aligned}
    R &= \{(-1,-1,-1,1), (0,0,0,1), (0,0,1,0), (0,1,0,0), (1,0,0,0), (1,1,1,-2)\}; \\
    C &= \{(1,2,3,4,5), (0,2,4,5), (0,1,3,4), (0,1,2,4), (0,1,2,3), (0,3,4,5), (0,2,3,5)\}.
  \end{aligned}
\end{equation}

\begin{proof}[Proof of \cref{proposition:2-8}]
  We want to compute the cohomology of the first two terms in the sequence \eqref{equation:conormal} twisted by~$\omega_F^\vee(-2L-2M)$ (which is~$\omega_X^\vee$ before adjunction), so by the Koszul sequence we want to compute the cohomology of the first two terms in the sequences
  \begin{equation}
    0\to\omega_F^\vee(-6L-6M)\to\omega_F^\vee(-4L-4M)\to\omega_F^\vee(-4L-4M)|_X\to 0
  \end{equation}
  and
  \begin{equation}
    0\to\Omega_F^1\otimes\omega_F^\vee(-4L-4M)\to\Omega_F^1\otimes\omega_F^\vee(-2L-2M)\to\Omega_F^1\otimes\omega_F^\vee(-2L-2M)|_X\to 0.
  \end{equation}
  One computes that
  \begin{equation}
    \begin{aligned}
      \hh^i(F,\omega_F^\vee(-6L-6M))&=0,0,0,0,1 \\
      \hh^i(F,\omega_F^\vee(-4L-4M))&=0,0,0,0,0 \\
      \hh^i(F,\Omega_F^1\otimes\omega_F^\vee(-4L-4M))&=0,0,1,0,0 \\
      \hh^i(F,\Omega_F^1\otimes\omega_F^\vee(-2L-2M))&=3,0,0,0,0
    \end{aligned}
  \end{equation}
  for~$i=0,\ldots,4$, which implies the statement after a diagram chase.
\end{proof}

\begin{remark}
  The homogeneous description from \cite{2009.13382v1} involves a vector bundle on~$\mathbb{P}^2\times\mathbb{P}^3\times\mathbb{P}^{12}$ which is not completely reducible, making the description as a toric complete intersection much more economical.
\end{remark}

\subsection{Zero loci of sections of homogeneous vector bundles}
\label{subsection:homogeneous}

By specialising \cite[Theorems~1.1 and~1.2]{2009.13382v1} to the remaining cases we can paraphrase the main result from op.~cit. for the relevant subset as follows.
\begin{theorem}
  \label{theorem:homogeneous-description}
  Let~$X$ be a Fano 3-fold. Assume its deformation type is not covered by \cref{theorem:fanosearch-description}, or is \MM{9}{10}. Then~$X$ is the zero locus of a (general) global section of a completely reducible homogeneous vector bundle on a product of Grassmannians. The description is given in \cref{table:homogeneous-descriptions}.
\end{theorem}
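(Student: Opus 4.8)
The plan is to deduce the statement directly from the main classification results of \cite{2009.13382v1}, specialised to the families at hand, so that the work lies in identifying the relevant families and extracting (and verifying) the entries of \cref{table:homogeneous-descriptions} rather than in constructing the descriptions from scratch. Concretely, \cite[Theorems~1.1 and~1.2]{2009.13382v1} realise \emph{every} Fano 3-fold as the zero locus of a general global section of a homogeneous vector bundle on a product of (possibly weighted) Grassmannians. First I would isolate the deformation families not already handled by \cref{theorem:fanosearch-description}---the seven Picard rank~1 families and the six Picard rank~2 families listed there, together with~\MM{9}{10}---and for each read off from op.\ cit.\ the ambient homogeneous space and the bundle~$\mathcal{E}$.

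The substantive part is then a case-by-case verification of two refinements over the raw output of \cite{2009.13382v1}. First, one must check that for each of these families the ambient can be taken to be an \emph{ordinary} (unweighted) product of Grassmannians, so that the Borel--Weil--Bott theorem is available in the form needed for the cohomology computations of \cref{subsection:homogeneous}. Second, and more delicately, one must confirm that the homogeneous bundle~$\mathcal{E}$ is \emph{completely reducible}, i.e.\ a direct sum of irreducible homogeneous bundles (external products of Schur functors of the tautological sub- and quotient bundles on the factors). This is precisely the property that can fail---as the discussion of~\MM{2}{8} around \cref{proposition:2-8} illustrates, the homogeneous model may involve a bundle that is not completely reducible---so complete reducibility is the condition one genuinely has to establish for every family on the list, and it is the step I expect to be the main obstacle.

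Finally, for each family I would confirm that the general zero locus~$X$ has the expected dimension~$3$, is smooth, and lies in the claimed deformation family. Smoothness and the dimension count follow from generality of the section via a Bertini-type argument, once the bundle is globally generated and~$\rank\mathcal{E}$ equals the codimension, while the identification of the deformation family can be pinned down by matching the numerical invariants~$\cc_1^3$, $\rho$, and~$\hh^{1,2}$ against the Mori--Mukai classification. Assembling these verified entries yields \cref{table:homogeneous-descriptions} and completes the proof; the inclusion of~\MM{9}{10}, which already admits a toric model, merely records an alternative homogeneous description and requires only the same verification.
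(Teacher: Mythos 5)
Your proposal is correct and follows essentially the same route as the paper: the theorem is obtained simply by specialising \cite[Theorems~1.1 and~1.2]{2009.13382v1} to the 14 relevant families and reading the models off the tables of op.~cit., which is all the paper does (it offers no separate proof, only the remark that the cited result covers 85 of the 105 deformation families). The case-by-case verifications you outline---complete reducibility, unweighted ambient Grassmannians, smoothness, and matching of the numerical invariants---are not redone in the paper, since they are precisely the content already established in \cite{2009.13382v1}, so your extra checks are sound but redundant relative to the paper's argument.
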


As explained in op.~cit.~this in fact holds for 85 out of 105 deformation families of Fano 3-folds, but we need it only for the 14~cases specified in \cref{theorem:homogeneous-description} which are listed in \cref{table:homogeneous-descriptions}. See also \cref{remark:canapplyhomogeneous}.

\begin{table}
  \centering
  \begin{tabular}{cccc}
    \toprule
    $X$                 & $F$                                                & $\mathcal{E}$                                                                                        & codimension \\
    \midrule
    \MM{1}{5}           & $\Gr(2,5)$                                         & $\mathcal{O}(2)\oplus\mathcal{O}(1)^{\oplus2}$                                                       & 3 \\
    \MM{1}{6}           & $\Gr(2,5)$                                         & $\mathcal{U}^\vee(1)\oplus\mathcal{O}(1)$                                                            & 3 \\
    \MM{1}{7}           & $\Gr(2,6)$                                         & $\mathcal{O}(1)^{\oplus5}$                                                                           & 5 \\
    \MM{1}{8}           & $\Gr(3,6)$                                         & $\bigwedge^2\mathcal{U}^\vee\oplus\mathcal{O}(1)^{\oplus3}$                                          & 6 \\
    \MM{1}{9}           & $\Gr(2,7)$                                         & $\mathcal{Q}^\vee(1)\oplus\mathcal{O}(1)^{\oplus2}$                                                  & 7 \\
    \MM{1}{10}          & $\Gr(3,7)$                                         & $(\bigwedge^2\mathcal{U}^\vee)^{\oplus3}$                                                            & 9 \\
    \MM{1}{15}          & $\Gr(2,5)$                                         & $\mathcal{O}(1)^{\oplus3}$                                                                           & 3 \\
    \addlinespace
    \MM{2}{14}          & $\Gr(2,5)\times\mathbb{P}^1$                       & $\mathcal{O}(1,0)^{\oplus3}\oplus\mathcal{O}(1,1)$                                                   & 4 \\
    \MM{2}{17}          & $\Gr(2,4)\times\mathbb{P}^3$                       & $\mathcal{U}^\vee\boxtimes\mathcal{O}_{\mathbb{P}^3}(1)\oplus\mathcal{O}(1,1)\oplus\mathcal{O}(1,0)$ & 4 \\
    \MM{2}{20}          & $\Gr(2,4)\times\mathbb{P}^2$                       & $\mathcal{U}^\vee\boxtimes\mathcal{O}_{\mathbb{P}^2}(1)\oplus\mathcal{O}(1,0)^{\oplus3}$             & 3 \\
    \MM{2}{21}          & $\Gr(2,4)\times\mathbb{P}^4$                       & $(\mathcal{U}^\vee\boxtimes\mathcal{O}_{\mathbb{P}^4}(1))^{\oplus 2}\oplus\mathcal{O}(1,0)$          & 5 \\
    \MM{2}{22}          & $\Gr(2,5)\times\mathbb{P}^3$                       & $\mathcal{Q}(1)\boxtimes\mathcal{O}_{\mathbb{P}^3}\oplus\mathcal{O}(0,1)^{\oplus3}$                  & 6 \\
    \MM{2}{26}          & $\Gr(2,4)\times\Gr(2,5)$                           & $\mathcal{Q}\boxtimes\mathcal{U}^\vee\oplus\mathcal{O}(1,0)\oplus\mathcal{O}(0,1)^{\oplus2}$         & 7 \\
    \addlinespace
    \MM{9}{1}           & $\mathbb{P}^1\times\mathbb{P}^2\times\mathbb{P}^1$ & $\mathcal{O}(2,2,0)$                                                                                 & 1 \\
    \bottomrule
  \end{tabular}
  \caption{Description as homogeneous zero loci}
  \label{table:homogeneous-descriptions}
\end{table}

\paragraph{An example: \MM{2}{17}}
In this subsection we exhibit a detailed example of the computation where the Fano 3-fold does not admit (at least a priori) a model as a complete intersection in a suitable toric variety. We will use the description given in \cite[\S34]{MR3470714} and \cite[Table~1]{2009.13382v1} and recalled in \cref{table:homogeneous-descriptions}. The deformation family \MM{2}{17}, originally described by Mori and Mukai as the blow up of the quadric 3-fold in an elliptic quintic, is realised as the zero locus~$\mathscr{Z}(\mathcal{E}) \subset F \colonequals \Gr(2,4) \times \mathbb{P}^3$ where
\begin{equation}
  \mathcal{E}\colonequals\mathcal{U}^{\vee}_{\Gr(2,4)}(0,1) \oplus \mathcal{O}(1,1) \oplus \mathcal{O}(1,0)
\end{equation}
is a rank 4 vector bundle.

\begin{proposition}
  Let~$X$ be a Fano~3-fold in the deformation family~\MM{2}{17}. Then we have that~$\hh^i(X,\bigwedge^2\tangent_X)=5,0,0$ for~$i=0,1,2$.
\end{proposition}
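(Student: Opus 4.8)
The plan is to follow the same strategy as in the example \MM{2}{8}, but with the toric cohomology computations replaced by Borel--Weil--Bott on the homogeneous variety $F=\Gr(2,4)\times\mathbb{P}^3$. First I would pin down the line bundles governing the twist. On $\Gr(2,4)$ one has $\omega^\vee=\mathcal{O}(4)$ and $\det\mathcal{U}^\vee=\mathcal{O}(1)$, and on $\mathbb{P}^3$ one has $\omega^\vee=\mathcal{O}(4)$, so $\omega_F^\vee=\mathcal{O}(4,4)$ and
\[
  \det\mathcal{E}=\det\bigl(\mathcal{U}^\vee(0,1)\bigr)\otimes\mathcal{O}(1,1)\otimes\mathcal{O}(1,0)=\mathcal{O}(1,2)\otimes\mathcal{O}(1,1)\otimes\mathcal{O}(1,0)=\mathcal{O}(3,3).
\]
By adjunction $\omega_X^\vee=(\omega_F^\vee\otimes\det\mathcal{E}^\vee)|_X=\mathcal{O}(1,1)|_X$, which is ample and hence consistent with $X$ being Fano. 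Twisting the conormal sequence \eqref{equation:conormal} by $\mathcal{O}(1,1)$ places $\bigwedge^2\tangent_X$ in the last slot:
\[
  0\to(\mathcal{E}^\vee\otimes\mathcal{O}(1,1))|_X\to(\Omega_F^1\otimes\mathcal{O}(1,1))|_X\to\bigwedge^2\tangent_X\to 0,
\]
so it suffices to compute the cohomology of the two bundles on the left and then run the long exact sequence.

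To compute $\HH^\bullet(X,\mathcal{F}|_X)$ for $\mathcal{F}=\mathcal{E}^\vee\otimes\mathcal{O}(1,1)$ and $\mathcal{F}=\Omega_F^1\otimes\mathcal{O}(1,1)$, I would resolve $\mathcal{O}_X$ by the Koszul complex \eqref{equation:koszul} of the rank-$4$ bundle $\mathcal{E}$ and tensor by $\mathcal{F}$, obtaining
\[
  0\to\det\mathcal{E}^\vee\otimes\mathcal{F}\to\bigwedge^3\mathcal{E}^\vee\otimes\mathcal{F}\to\bigwedge^2\mathcal{E}^\vee\otimes\mathcal{F}\to\mathcal{E}^\vee\otimes\mathcal{F}\to\mathcal{F}\to\mathcal{F}|_X\to 0.
\]
Because $\mathcal{E}^\vee=\mathcal{U}(0,-1)\oplus\mathcal{O}(-1,-1)\oplus\mathcal{O}(-1,0)$ is completely reducible, every term $\bigwedge^j\mathcal{E}^\vee\otimes\mathcal{F}$ is a direct sum of irreducible homogeneous bundles on $\Gr(2,4)\times\mathbb{P}^3$; the only slightly nontrivial input is the splitting $\Omega_F^1=(\Omega^1_{\Gr(2,4)}\boxtimes\mathcal{O})\oplus(\mathcal{O}\boxtimes\Omega^1_{\mathbb{P}^3})$ and the exterior powers of $\mathcal{E}^\vee$. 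I would compute $\HH^\bullet(F,-)$ of each summand by Künneth together with Borel--Weil--Bott on each factor, and then feed these numbers into the hypercohomology spectral sequence of the Koszul resolution, whose first page is assembled from the groups $\HH^q(F,\bigwedge^j\mathcal{E}^\vee\otimes\mathcal{F})$ and which converges to $\HH^\bullet(X,\mathcal{F}|_X)$.

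Finally I would combine the two resulting cohomology computations through the long exact sequence of the twisted conormal sequence and read off $\hh^i(X,\bigwedge^2\tangent_X)$. As an independent check, \cref{lemma:hrr-computations} gives, from the invariants $\cc_1^3=24$, $\rho=2$, $\hh^{1,2}=1$ of this family, that $\chi(X,\bigwedge^2\tangent_X)=\cc_1^3-18-\rho+\hh^{1,2}=24-18-2+1=5$; hence once the computation produces $\hh^1=\hh^2=0$, the value $\hh^0=5$ is forced, matching the claim.

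The main obstacle I anticipate is organisational rather than conceptual: the Koszul complex has five terms, each $\bigwedge^j\mathcal{E}^\vee\otimes\mathcal{F}$ splits into several irreducible summands, and each summand demands a separate Borel--Weil--Bott weight computation, so the bookkeeping is heavy (precisely why the paper automates it). The genuinely delicate point is controlling the spectral sequence and the connecting maps: I must verify that the surviving $E_1$-entries lie in bidegrees where no higher differential, and no connecting map in the final long exact sequence, can change the totals---either because the entries are spread across incompatible degrees or because the Euler-characteristic constraint above leaves no room for unwanted cancellation. If any ambiguity at the level of dimensions persisted, one would have to identify a specific differential or connecting morphism, and that is the step where an explicit computation becomes unavoidable.
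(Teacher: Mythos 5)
Your proposal follows the paper's proof essentially verbatim: the paper likewise twists the conormal sequence by $\omega_X^\vee=\mathcal{O}(1,1)|_X$, resolves the first two terms of the twisted sequence by the Koszul complex of the rank-4 bundle $\mathcal{E}$ on $F=\Gr(2,4)\times\mathbb{P}^3$, and evaluates each completely reducible term via K\"unneth and Borel--Weil--Bott, finding that the only surviving groups are $\hh^0(F,\mathcal{E}^\vee\otimes\mathcal{O}(1,1))=9$ and $\hh^1(F,\Omega_F^1\otimes\mathcal{O}(1,1)\otimes\mathcal{E}^\vee)=14$, from which the long exact sequence yields $\hh^i(X,\bigwedge^2\tangent_X)=5,0,0$ with no room for differentials or connecting-map ambiguity. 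Your additional Euler-characteristic cross-check via \cref{lemma:hrr-computations} is a sound safeguard the paper does not spell out, and your explicit identification of the twist as $\mathcal{O}(1,1)$ agrees with the numbers the paper's computation actually uses.
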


\begin{proof}
  We will follow the strategy used in \cite[\S3.3]{2009.13382v1} and summarised in \cref{subsection:setup}. We need to compute the cohomologies of the first two terms in \eqref{equation:conormal-twisted}, which are resolved by exact complexes of locally free sheaves, namely the twists of the Koszul complex \eqref{equation:koszul} by $\mathcal{E}^\vee \otimes \omega_F^\vee\otimes\det\mathcal{E}$ and $\Omega_F^1 \otimes \omega_F^\vee\otimes\det\mathcal{E}$. In this case we have
  \begin{equation}
    \begin{aligned}
      \Omega_F^1&= \mathcal{U}_{\Gr(2,4)} \otimes \mathcal{Q}^{\vee}_{\Gr(2,4)} \oplus \mathcal{Q}^{\vee}_{\mathbb{P}^3}(-1), \\
      \omega_F&= \mathcal{O}_F(-4,-4).
    \end{aligned}
  \end{equation}
  Each term of the locally free resolutions is a completely reducible vector bundle on $F$, and we can use the Borel--Weil--Bott theorem to compute its cohomology. It turns out that there are only 2 non-zero cohomology groups for the first two terms of \eqref{equation:conormal-twisted} tensored with~$\bigwedge^i\mathcal{E}^\vee$ before restriction, for with~$i=0,\ldots,4=\rk\mathcal{E}$, namely
\begin{equation}
    \begin{aligned}
      \hh^0(F,\mathcal{E}^\vee \otimes \omega_F^\vee\otimes\det\mathcal{E})&=9 \\
      \hh^1(F,(\Omega_F^1 \otimes \omega_F^\vee\otimes\det\mathcal{E})\otimes\mathcal{E}^\vee)&=14.
    \end{aligned}
  \end{equation}
  From this we get that the only non-zero cohomologies of the first two terms of \eqref{equation:conormal-twisted} are
  \begin{equation}
    \begin{aligned}
      \hh^0(X,(\mathcal{E}^\vee \otimes \omega_F^\vee\otimes\det\mathcal{E})|_X)&=9 \\
      \hh^0(X,(\Omega_F^1 \otimes \omega_F^\vee\otimes\det\mathcal{E})|_X)&=14
    \end{aligned}
  \end{equation}
  and the statement follows.
\end{proof}

\begin{remark}
  \label{remark:canapplyhomogeneous}
  It is possible to apply the description as a zero locus in a homogeneous variety to \emph{all} Fano 3-folds, but for the purpose of this paper we only do this for the 14 cases listed in \cref{table:homogeneous-descriptions}.

  The benefit of the toric description is that the codimension is usually (much) lower, making the computation faster and having less places where indeterminacies can occur. E.g.~for~\MM{3}{9} the description from \cite{2009.13382v1} has codimension~25, which requires a lengthy Koszul computation. Another complication in the computations in the homogeneous setting is that for some Fano 3-folds the homogeneous bundle used in the description is not completely reducible.
\end{remark}

\section{Underdetermined cases}
\label{section:by-hand}
As discussed above there are just a few cases which require additional computations. These are~\MM{1}{1}, \MM{2}{1}, \MM{2}{3}, \MM{4}{13}, and \MM{10}{1}. We will collect the details for them here. For \MM{2}{1}, \MM{2}{3} and \MM{4}{13} they are somewhat tedious cohomology computations using the birational description. If it were not for the efficiency of the toric and homogeneous computations the majority of the Fano 3-folds would have to be tackled in this way.

The first one is straightforward.
\begin{proposition}
  \label{proposition:1-1}
  Let~$X$ be a Fano~3-fold in the deformation family~\MM{1}{1}. Then we have that~$\hh^i(X,\bigwedge^2\tangent_X)=0,0,35$ for~$i=0,1,2$.
\end{proposition}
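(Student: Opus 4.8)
The plan is to feed the standard toric model of \MM{1}{1} into the machinery of \cref{subsection:setup} and then remove the single residual ambiguity using Kodaira vanishing. Take $F=\mathbb{P}(1,1,1,1,3)$ and realise $X$ as the sextic double solid, the hypersurface $\{y^2=f_6(x)\}$ cut out by a general section of $\mathcal{E}=\mathcal{O}_F(6)$; this is the intersection of a Cartier divisor, and since $f_6$ involves only the weight-one coordinates the singular point $[0:\cdots:0:1]$ of $F$ does not lie on $X$, so $X\subset F^{\mathrm{sm}}$ and the conormal set-up of \cref{subsection:setup} applies. Adjunction gives $\omega_X^\vee\cong\mathcal{O}_X(1)$, and \cref{lemma:hrr-computations} with $(\cc_1^3,\rho,\hh^{1,2})=(2,1,52)$ yields $\chi(X,\bigwedge^2\tangent_X)=35$; so it remains only to locate the three numbers $\hh^0,\hh^1,\hh^2$ whose alternating sum is already fixed.

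First I would compute the cohomology of the first two terms of the twisted conormal sequence \eqref{equation:conormal-twisted}, which here are $\mathcal{O}_X(-5)$ and $(\Omega_F^1(1))|_X$. Resolving by the Koszul complex \eqref{equation:koszul} and expanding $\Omega_F^1$ through the generalised Euler sequence $0\to\Omega_F^1\to\mathcal{O}_F(-1)^{\oplus4}\oplus\mathcal{O}_F(-3)\to\mathcal{O}_F\to0$ on the weighted projective space, every group reduces to line-bundle cohomology on $F$, exactly the computation the toric implementation carries out. The upshot is that both ambient terms have cohomology concentrated in degree $3$ on $X$ (one finds $\hh^3(X,\mathcal{O}_X(-5))=39$ and $\hh^3(X,(\Omega_F^1(1))|_X)=4$) and vanish in degrees $0,1,2$. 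Plugging this into the long exact sequence of \eqref{equation:conormal-twisted} at once yields $\hh^0(X,\bigwedge^2\tangent_X)=\hh^1(X,\bigwedge^2\tangent_X)=0$.

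The step that dimension counting alone cannot settle---and precisely why \MM{1}{1} is left underdetermined by the automated pipeline---is the top group: in the long exact sequence $\HH^2(X,\bigwedge^2\tangent_X)$ appears as the kernel of the connecting map $\delta\colon\HH^3(X,\mathcal{O}_X(-5))\to\HH^3(X,(\Omega_F^1(1))|_X)$ between the two nonzero degree-$3$ groups, whose rank is invisible to a count of dimensions. This is where Kodaira enters: by \cref{lemma:kodaira} we have $\HH^3(X,\bigwedge^2\tangent_X)=0$, which forces $\delta$ to be surjective, so that $\hh^2=39-4=35$ (equivalently, combine $\hh^0=\hh^1=0$ with $\chi=35$). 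I expect the only genuinely delicate bookkeeping to be handling $\Omega_F^1$ on the singular ambient $F$ correctly---using the Euler sequence for the toric (reflexive) differentials rather than any naive cotangent sheaf---after which the stated values $0,0,35$ are forced.
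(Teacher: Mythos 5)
Your proof is correct and follows essentially the same route as the paper's: the paper likewise runs the toric conormal--Koszul computation of \cref{subsection:setup} on the sextic hypersurface in $\mathbb{P}(1^4,3)$, obtains $\hh^i(X,\bigwedge^2\tangent_X)=0,0,35+a,a$ for $i=0,1,2,3$ (your $a$ being the corank of the connecting map $\delta$), and then invokes \cref{lemma:kodaira} to force $a=0$. Your explicit values $\hh^3(X,\mathcal{O}_X(-5))=39$ and $\hh^3(X,(\Omega_F^1(1))|_X)=4$ check out, and the points you flag---Cartierness of $\mathcal{O}_F(6)$, $X\subset F^{\mathrm{sm}}$, and using the reflexive Euler sequence---are precisely the hypotheses the paper's setup requires.
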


\begin{proof}
  In this case~$X$ is a sextic hypersurface in the weighted projective space~$\mathbb{P}(1^4,3)$. Using the method from \cref{subsection:toric} on this description for the toric variety~$\mathbb{P}(1^4,3)$ we immediately obtain that~$\hh^i(X,\bigwedge^2\tangent_X)=0,0,35+a,a$ for~$i=0,1,2,3$ for some~$a\geq 0$. But by \cref{lemma:kodaira} we have that~$\hh^3(X,\bigwedge^2\tangent_X)=0$, so~$a=0$.

Alternatively, one can use that this is a double cover~$f\colon X\to Y$ of~$\mathbb{P}^3=Y$ with a smooth sextic surface~$S$ as branch locus. To do so, recall the short exact sequence
  \begin{equation}
    0\to\Omega_Y^1\to\Omega_Y^1(\log S)\to\mathcal{O}_S\to 0
  \end{equation}
  and the isomorphism
  \begin{equation}
    f_*(\Omega_X^1)\cong\Omega_Y^1\oplus\Omega_Y^1(\log S)\otimes\mathcal{O}_Y(-3)
  \end{equation}
  from \cite[\S2.3 and Lemma~3.16(d)]{MR1193913}, together with the isomorphism
  \begin{equation}
    \omega_X^\vee\cong f^*(\omega_Y^\vee\otimes\mathcal{O}_Y(3)).
  \end{equation}
  This allows one to compute~$\HH^i(X,\Omega_X^1\otimes\omega_X^\vee)$, and the only non-vanishing cohomology lives in degree~2 and is isomorphic to
  \begin{equation}
    \HH^2(Y,\mathcal{O}_S(-2))\cong\HH^3(Y,\mathcal{O}_Y(-8))\cong\HH^0(Y,\mathcal{O}_Y(4))
  \end{equation}
  which is~35-dimensional.
\end{proof}

For the next three cases we will resort to the \emph{birational} description by Mori--Mukai. Namely we will consider the situation of a Fano 3-fold~$X$ which is the blowup of a complete intersection curve~$Z$ inside another Fano~3-fold~$Y$. Let us denote the blowup square as
\begin{equation}
  \label{equation:blowup-setup}
  \begin{tikzcd}
    E \arrow[r, hook, "j"] \arrow[d, "p", swap] & X \arrow[d, "f"] \\
    Z \arrow[r, hook, "i"] & Y
  \end{tikzcd}
\end{equation}
and consider the short exact sequence
\begin{equation}
  0\to f^*(\Omega_Y^1)\to\Omega_X^1\to j_*(\Omega_{E/Z}^1)\to 0.
\end{equation}
We wish to compute the cohomology of the middle term after twisting by~$\omega_X^\vee=f^*(\omega_Y^\vee)\otimes\mathcal{O}_X(-E)$, i.e.~we will consider the short exact sequence
\begin{equation}
  \label{equation:blowup-twisted-ses}
  0\to f^*(\Omega_Y^1\otimes\omega_Y^\vee)\otimes\mathcal{O}_X(-E)\to\bigwedge^2\tangent_X\to j_*(\Omega_{E/Z}^1)\otimes\omega_X^\vee\to 0.
\end{equation}

\begin{lemma}
  \label{lemma:blowup-vanishing}
  With the setup from \eqref{equation:blowup-setup} we have that
  \begin{equation}
    \HH^\bullet(X,j_*(\Omega_{E/Z}^1)\otimes\omega_X^\vee)=0.
  \end{equation}
\end{lemma}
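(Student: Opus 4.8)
The plan is to reduce the computation on~$X$ to a fibrewise statement on the exceptional divisor~$E$, which is a~$\mathbb{P}^1$-bundle over the curve~$Z$. Since~$j\colon E\hookrightarrow X$ is a closed immersion, the projection formula gives~$j_*(\Omega_{E/Z}^1)\otimes\omega_X^\vee\cong j_*(\Omega_{E/Z}^1\otimes\omega_X^\vee|_E)$, and because pushforward along a closed immersion is exact and preserves cohomology we obtain
\begin{equation}
  \HH^\bullet(X,j_*(\Omega_{E/Z}^1)\otimes\omega_X^\vee)\cong\HH^\bullet(E,\Omega_{E/Z}^1\otimes\omega_X^\vee|_E).
\end{equation}
It therefore suffices to show that this line bundle on~$E$ has vanishing cohomology.

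First I would pin down the fibre degree of the restricted line bundle. As~$Z$ has codimension~2 in~$Y$, the exceptional divisor~$E=\mathbb{P}(N_{Z/Y})$ is a~$\mathbb{P}^1$-bundle~$p\colon E\to Z$, and~$\Omega_{E/Z}^1$ restricts to~$\mathcal{O}_{\mathbb{P}^1}(-2)$ on every fibre. Using~$\omega_X^\vee\cong f^*(\omega_Y^\vee)\otimes\mathcal{O}_X(-E)$ together with~$f|_E=i\circ p$ and the standard identification of~$\mathcal{O}_X(E)|_E$ with the relative tautological bundle~$\mathcal{O}_E(-1)$, the factor~$f^*(\omega_Y^\vee)|_E$ is pulled back from~$Z$ along~$p$ and is trivial on fibres, while~$\mathcal{O}_X(-E)|_E$ restricts to~$\mathcal{O}_{\mathbb{P}^1}(1)$ on each fibre. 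Hence~$\Omega_{E/Z}^1\otimes\omega_X^\vee|_E$ has fibre degree~$-2+1=-1$.

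Since~$\operatorname{Pic}(E)\cong p^*\operatorname{Pic}(Z)\oplus\mathbb{Z}\cdot\mathcal{O}_E(1)$ for a~$\mathbb{P}^1$-bundle, the degree count shows
\begin{equation}
  \Omega_{E/Z}^1\otimes\omega_X^\vee|_E\cong\mathcal{O}_E(-1)\otimes p^*\mathcal{L}
\end{equation}
for some line bundle~$\mathcal{L}$ on~$Z$ (which could be made explicit from the relative Euler sequence, but this is not needed). The key point is that the relative cohomology of~$\mathcal{O}_E(-1)$ along~$p$ vanishes, i.e.~$\RRR p_*\mathcal{O}_E(-1)=0$, since~$\HH^\bullet(\mathbb{P}^1,\mathcal{O}_{\mathbb{P}^1}(-1))=0$ and cohomology and base change apply on the flat family~$p$. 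By the projection formula~$\RRR p_*(\Omega_{E/Z}^1\otimes\omega_X^\vee|_E)\cong\mathcal{L}\otimes\RRR p_*\mathcal{O}_E(-1)=0$, and the Leray spectral sequence for~$p$ then forces~$\HH^\bullet(E,\Omega_{E/Z}^1\otimes\omega_X^\vee|_E)=0$, which is the desired vanishing.

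The only genuinely delicate step is the bookkeeping in the second paragraph: getting the signs and twists right in~$\omega_X^\vee|_E$ and~$\Omega_{E/Z}^1$, in particular the convention-dependent identification of~$\mathcal{O}_X(E)|_E$ with the relative~$\mathcal{O}_E(-1)$. Once the fibre degree is pinned down to~$-1$, the vanishing is automatic, so I expect no further obstacle; in particular the precise value of~$\mathcal{L}$ never enters.
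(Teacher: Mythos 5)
Your proof is correct and takes essentially the same route as the paper's: both identify $\Omega_{E/Z}^1\otimes\omega_X^\vee|_E$ with $\mathcal{O}_p(-1)\otimes p^*\mathcal{L}$ for some line bundle $\mathcal{L}$ on $Z$ and conclude from the vanishing of $\RRR p_*\mathcal{O}_p(-1)$ via the projection formula and Leray. The paper merely compresses the fibre-degree bookkeeping you spell out (writing $\omega_X|_E=\mathcal{O}_p(-1)$ up to a twist pulled back from $Z$, which is absorbed into $\mathcal{L}$), so there is nothing to add.
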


\begin{proof}
  Since~$Z$ has codimension~2, we have that~$\Omega_{E/Z}^1\cong\mathcal{O}_p(-2)\otimes p^*(\mathcal{L})$ for some line bundle~$\mathcal{L}$ on~$Z$, whilst~$\omega_X|_E=\mathcal{O}_p(-1)$. This implies that~$j_*(\Omega_{E/Z}^1)\otimes\omega_X^\vee\cong j_*(\mathcal{O}_p(-1)\otimes p^*\mathcal{L})$. But then the vanishing of~$\RRR p_*\mathcal{O}_p(-1)$ ensures that
  \begin{equation}
    \HH^\bullet(X,j_*(\Omega_{E/Z}^1)\otimes\omega_X^\vee)
    \cong\HH^\bullet(E,\mathcal{O}_p(-1)\otimes p^*\mathcal{L})
    \cong\HH^\bullet(Z,\RRR p_*\mathcal{O}_p(-1)\otimes\mathcal{L})
    =0,
  \end{equation}
which is what we wanted to show.
\end{proof}

\begin{corollary}
  \label{corollary:blowup-identification}
  With the setup from \eqref{equation:blowup-setup} we have that
  \begin{equation}
    \HH^\bullet(X,\bigwedge^2\tangent_X)\cong\HH^\bullet(Y,\Omega_Y^1\otimes\omega_Y^\vee\otimes\mathcal{I}_Z).
  \end{equation}
\end{corollary}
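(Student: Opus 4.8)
The plan is to feed the short exact sequence \eqref{equation:blowup-twisted-ses} into the long exact sequence of cohomology and then compute the surviving term by pushing it down to $Y$. First I would invoke \cref{lemma:blowup-vanishing}, which says $\HH^\bullet(X,j_*(\Omega_{E/Z}^1)\otimes\omega_X^\vee)=0$. Since the rightmost term of \eqref{equation:blowup-twisted-ses} has vanishing cohomology in every degree, the long exact sequence associated to \eqref{equation:blowup-twisted-ses} degenerates and the inclusion of the leftmost term induces isomorphisms
\[
  \HH^\bullet\bigl(X, f^*(\Omega_Y^1\otimes\omega_Y^\vee)\otimes\mathcal{O}_X(-E)\bigr)\overset{\cong}{\to}\HH^\bullet(X,\bigwedge^2\tangent_X).
\]
So the whole problem reduces to identifying the cohomology of $f^*(\Omega_Y^1\otimes\omega_Y^\vee)\otimes\mathcal{O}_X(-E)$ with $\HH^\bullet(Y,\Omega_Y^1\otimes\omega_Y^\vee\otimes\mathcal{I}_Z)$.

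The essential input, which I regard as the one genuine (though standard) step, is the identity $\RRR f_*\mathcal{O}_X(-E)\cong\mathcal{I}_Z$. To establish it I would apply $\RRR f_*$ to the structure sequence $0\to\mathcal{O}_X(-E)\to\mathcal{O}_X\to\mathcal{O}_E\to0$. Here $\RRR f_*\mathcal{O}_X=\mathcal{O}_Y$ because $Z$ is smooth, and $\RRR f_*\mathcal{O}_E\cong\mathcal{O}_Z$ because $p\colon E\to Z$ is a $\mathbb{P}^1$-bundle (as $Z$ has codimension~$2$), so that $p_*\mathcal{O}_E=\mathcal{O}_Z$ with vanishing higher direct images. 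The resulting distinguished triangle exhibits $\RRR f_*\mathcal{O}_X(-E)$ as the fibre of the canonical surjection $\mathcal{O}_Y\to i_*\mathcal{O}_Z$; reading off the long exact sequence of higher direct images, surjectivity of $\mathcal{O}_Y\to\mathcal{O}_Z$ forces $R^{\geq1}f_*\mathcal{O}_X(-E)=0$ and $f_*\mathcal{O}_X(-E)=\mathcal{I}_Z$, so $\RRR f_*\mathcal{O}_X(-E)\cong\mathcal{I}_Z$ sits in degree~$0$.

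With this in hand the remainder is formal. Since $Y$ is smooth $\Omega_Y^1\otimes\omega_Y^\vee$ is locally free, so the projection formula applies without derived corrections and gives
\[
  \RRR f_*\bigl(f^*(\Omega_Y^1\otimes\omega_Y^\vee)\otimes\mathcal{O}_X(-E)\bigr)\cong(\Omega_Y^1\otimes\omega_Y^\vee)\otimes\RRR f_*\mathcal{O}_X(-E)\cong\Omega_Y^1\otimes\omega_Y^\vee\otimes\mathcal{I}_Z.
\]
Applying $\RRR\Gamma(Y,-)$ and using the Leray (i.e.\ Grothendieck) spectral sequence then yields $\HH^\bullet(X, f^*(\Omega_Y^1\otimes\omega_Y^\vee)\otimes\mathcal{O}_X(-E))\cong\HH^\bullet(Y,\Omega_Y^1\otimes\omega_Y^\vee\otimes\mathcal{I}_Z)$. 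Combined with the isomorphism from the first paragraph this gives the claimed identification $\HH^\bullet(X,\bigwedge^2\tangent_X)\cong\HH^\bullet(Y,\Omega_Y^1\otimes\omega_Y^\vee\otimes\mathcal{I}_Z)$. The only place requiring care is the computation of $\RRR f_*\mathcal{O}_X(-E)$; everything else is bookkeeping with the projection formula and Leray, so I do not expect any real obstacle beyond that.
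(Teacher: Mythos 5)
Your proposal is correct and takes essentially the same route as the paper, whose proof consists precisely of citing the sequence \eqref{equation:blowup-twisted-ses}, the vanishing from \cref{lemma:blowup-vanishing}, the isomorphism $\RRR f_*(\mathcal{O}_X(-E))\cong\mathcal{I}_Z$, and adjunction (projection formula plus Leray). The only difference is that you spell out the standard verification of $\RRR f_*(\mathcal{O}_X(-E))\cong\mathcal{I}_Z$ via the structure sequence of $E$ and the $\mathbb{P}^1$-bundle $p\colon E\to Z$, which the paper simply quotes.
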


\begin{proof}
  This follows from \eqref{equation:blowup-twisted-ses}, the vanishing in \cref{lemma:blowup-vanishing}, the isomorphism~$\RRR f_*(\mathcal{O}_X(-E))\cong\mathcal{I}_Z$, and adjunction.
\end{proof}

We now consider the underdetermined cases \MM{2}{1} and \MM{2}{3}. In this case the methods of \cref{subsection:toric} don't necessarily apply: both are described as a codimension-2 complete intersection in a singular toric projective variety, and in both cases one of the divisors is not Cartier. Therefore we cannot ensure that the computational (underdetermined) answer is correct\footnote{The underdetermined result from the toric computation is nevertheless consistent with the final answer, so a more detailed analysis of the toric description might be valid. But the indeterminacy needs to be dealt with by alternative methods in any case.}, so we will combine \cref{corollary:blowup-identification} with Dolgachev's computation of sheaf cohomology on weighted projective spaces. We will repeatedly make use of Dolgachev's formulae for twisted Hodge numbers on weighted projective spaces, cf.~\cite[\S2.3.2--2.3.5]{MR0704986}.

\begin{proposition}
  \label{proposition:2-1}
  Let~$X$ be a Fano 3-fold in the deformation family~\MM{2}{1}. Then we have that~$\hh^i(X,\bigwedge^2\tangent_X)=1,2,7$ for~$i=0,1,2$.
\end{proposition}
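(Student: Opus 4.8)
The plan is to exploit the birational description of $X$ as a blowup and reduce everything to a cohomology computation on the base $Y$, using \cref{corollary:blowup-identification}. The family \MM{2}{1} is the blowup of a Fano 3-fold $Y$ along a complete intersection curve $Z\subset Y$. The first step is to identify $Y$ and $Z$ precisely from the Mori--Mukai tables: $Y$ will be one of the primitive Fano 3-folds of Picard rank 1, and $Z$ will be cut out by an explicit collection of divisors, giving a Koszul resolution
\begin{equation}
  \label{equation:my-koszul}
  0\to\bigwedge^2\mathcal{N}^\vee\to\mathcal{N}^\vee\to\mathcal{O}_Y\to\mathcal{O}_Z\to 0,
\end{equation}
where $\mathcal{N}^\vee=(\mathcal{I}_Z/\mathcal{I}_Z^2)^\vee|_Z$ is built from the line bundles defining $Z$. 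By \cref{corollary:blowup-identification} the target cohomology $\HH^\bullet(X,\bigwedge^2\tangent_X)$ is isomorphic to $\HH^\bullet(Y,\Omega_Y^1\otimes\omega_Y^\vee\otimes\mathcal{I}_Z)$, so the blowup contribution from the exceptional divisor has already been discarded via \cref{lemma:blowup-vanishing}.

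Once this identification is in place, I would twist the ideal-sheaf sequence $0\to\mathcal{I}_Z\to\mathcal{O}_Y\to\mathcal{O}_Z\to 0$ by $\Omega_Y^1\otimes\omega_Y^\vee$ and combine it with the Koszul complex \eqref{equation:my-koszul} (also twisted by $\Omega_Y^1\otimes\omega_Y^\vee$). This expresses the desired cohomology through the cohomology of $\Omega_Y^1\otimes\omega_Y^\vee$ and of its twists by the $\bigwedge^i\mathcal{N}^\vee$ on $Y$, together with the restriction terms on $Z$. Since $Y$ here is realised as a weighted projective space (or a hypersurface therein), the individual groups $\HH^q(Y,\Omega_Y^p\otimes\mathcal{L})$ are computable by Dolgachev's formulae for twisted Hodge numbers on weighted projective spaces, cited in the excerpt from \cite[\S2.3.2--2.3.5]{MR0704986}. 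I would assemble all these pieces into a single hypercohomology spectral sequence (or, more elementarily, a sequence of long exact sequences) converging to $\HH^\bullet(Y,\Omega_Y^1\otimes\omega_Y^\vee\otimes\mathcal{I}_Z)$.

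The final step is bookkeeping: read off $\hh^0,\hh^1,\hh^2$ from the spectral sequence and check that they total the predicted $1,2,7$. As a consistency check one has the Euler characteristic $\chi(X,\bigwedge^2\tangent_X)$ from \eqref{equation:hrr-bivectors}, which must equal $1-2+7=6$ for this family, and the known vanishing $\hh^3(X,\bigwedge^2\tangent_X)=0$ from \cref{lemma:kodaira}. These two constraints pin down the answer once the individual $\HH^q$ are determined up to the usual ambiguity.

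The hard part will be controlling the differentials in the spectral sequence — equivalently, the connecting maps in the long exact sequences. The Euler characteristic only gives the alternating sum, so to separate $\hh^0=1$, $\hh^1=2$, $\hh^2=7$ I must show that certain maps between the Dolgachev cohomology groups are either zero or of maximal rank; these maps are the genuinely geometric input and cannot be deduced from numerics alone. In particular, establishing that the bottom-degree contribution collapses to exactly a one-dimensional $\HH^0$ (so that $X$ carries an essentially unique global bivector) is where the delicate rank computation lies, and where I expect the weighted-projective singularities of $Y$ to complicate the Borel--Weil--Bott-style vanishing one would like to invoke.
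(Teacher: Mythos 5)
Your reduction is exactly half of the paper's proof, and it is the half that cannot stand alone. Via \cref{corollary:blowup-identification} the problem indeed becomes computing $\HH^\bullet(Y,\Omega_Y^1\otimes\omega_Y^\vee\otimes\mathcal{I}_Z)$, where $Y$ is of type \MM{1}{11} (a sextic in $\mathbb{P}(1^3,2,3)$) and $Z$ is an elliptic curve cut out by two half-anticanonical divisors; the ideal-sheaf sequence twisted by $\Omega_Y^1\otimes\omega_Y^\vee$, together with toric/Dolgachev computations, gives $\hh^i(Y,\Omega_Y^1\otimes\omega_Y^\vee)=3,0,7$ and $\hh^i(Z,(\Omega_Y^1\otimes\omega_Y^\vee)|_Z)=4,0$, hence $\hh^0-\hh^1=-1$ and $\hh^2=7$ for the ideal-sheaf twist. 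At that point one unknown remains: splitting $(\hh^0,\hh^1)=(1,2)$ is equivalent to the restriction map $\HH^0(Y,\Omega_Y^1\otimes\omega_Y^\vee)\to\HH^0(Z,(\Omega_Y^1\otimes\omega_Y^\vee)|_Z)$, from a $3$-dimensional to a $4$-dimensional space, having rank exactly $2$, and you offer no mechanism for computing this rank --- you flag it yourself as ``the delicate rank computation''. Your two auxiliary constraints genuinely cannot close the gap: $\hh^3=0$ from \cref{lemma:kodaira} is already consumed in obtaining $\hh^2=7$, and the Euler characteristic from \eqref{equation:hrr-bivectors} is automatically satisfied by the long exact sequence, since $\chi(\mathcal{I}_Z\otimes\Omega_Y^1\otimes\omega_Y^\vee)=(3-0+7)-4=6$; after imposing both, the entire family $(\hh^0,\hh^1,\hh^2)=(t,t+1,7)$, $t\geq0$, survives, so your claim that these constraints ``pin down the answer'' is incorrect. (A cosmetic slip besides: the middle terms of your Koszul resolution must be the line bundles $\mathcal{O}_Y(-1)^{\oplus2}$ and $\mathcal{O}_Y(-2)$ on $Y$; the sheaf $(\mathcal{I}_Z/\mathcal{I}_Z^2)^\vee|_Z$ lives on $Z$ and cannot appear in a resolution of $\mathcal{O}_Z$ by sheaves on $Y$.)

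The idea your proposal is missing is that the paper never computes the rank of any connecting map: it instead runs a second, independent computation from a different model. The same $X$ is a divisor of class $(1,1)$ in $F=Y\times\mathbb{P}^1$, and the conormal-plus-Koszul computation on $F$, using Dolgachev's formulae on $\mathbb{P}(1^3,2,3)$, yields another underdetermined answer with unknowns $\alpha,\beta$, of which $\alpha$ is killed by $\hh^3(X,\bigwedge^2\tangent_X)=0$; this leaves the \emph{complementary} pair $\hh^0(X,\bigwedge^2\tangent_X)=1$ and $\hh^2-\hh^1=5$. Intersecting the two partial answers forces $(1,2,7)$, with no map ever evaluated. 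To complete your argument you must either import such a second description, or prove directly that for general $Z$ exactly a one-dimensional space of global sections of $\Omega_Y^1(2)\cong\bigwedge^2\tangent_Y$ vanishes along $Z$ --- a genuinely geometric statement that the spectral-sequence bookkeeping you describe will not produce on its own.
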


\begin{proof}
  Such a variety is described as the blowup of a Fano 3-fold~$Y$ of deformation type~\MM{1}{11} in an elliptic curve obtained as complete intersection of two half-anticanonical divisors, and~$Y$ is given as a sextic hypersurface in the weighted projective space~$\mathbb{P}\colonequals\mathbb{P}(1^3,2,3)$.

  Alternatively, $X$~is a~$(1,1)$-section of~$F\colonequals Y\times \mathbb{P}^1$. We will first use this description to determine~$\hh^0$ and~$\hh^2-\hh^1$, and then use the blowup description to determine~$\hh^2$ (and~$\hh^1-\hh^0$), which determines everything.

  \paragraph{First step: weighted projective space computation}
  Consider the conormal sequence for the inclusion~$X\hookrightarrow F$ twisted by the anticanonical line bundle, which by the adjunction formula is
  \begin{equation}
    \omega_X^\vee\cong(\mathcal{O}_F(2,2)\otimes\mathcal{O}_F(-X))|_X\cong\mathcal{O}_F(1,1)|_X,
  \end{equation}
  which yields
  \begin{equation}
    \label{equation:2-1-conormal}
    0 \to \mathcal{O}_X \to \Omega^1_F(1,1)|_X \to \Omega^1_X(1,1) \to 0.
  \end{equation}
  We claim that
  \begin{equation}
    \label{equation:2-1-weighted-claim}
    \left\{
    \begin{aligned}
      \hh^0(X,\Omega^1_X(1,1))&=1 \\
      \hh^2(X,\Omega^1_X(1,1))-\hh^1(X,\Omega^1_X(1,1))&=5.
    \end{aligned}
    \right.
  \end{equation}
  This claim follows from~$\hh^0(F,\Omega^1_F(1,1)|_X)=2$ and~$\hh^2(F,\Omega^1_F(1,1)|_X)-\hh^1(F,\Omega^1_F(1,1)|_X)=5$ together with \eqref{equation:2-1-conormal}, so let us prove this. Consider the twisted Koszul sequence
  \begin{equation}
    \label{equation:2-1-koszul}
    0 \to \Omega^1_F \to \Omega^1_F(1,1) \to \Omega^1_F(1,1)|_X \to 0.
  \end{equation}
  As~$\Omega^1_F$ is the direct sum of the pullbacks of the cotangent bundles of~$Y$ and~$\mathbb{P}^1$, one readily gets from the Hodge numbers of $\mathbb{P}^1$ and \MM{1}{11} that~$\hh^i(F,\Omega^1_F)=0,2,21,0,0$ for~$i=0,1,2,3,4$. Thus it remains to compute the cohomologies of the middle term of \eqref{equation:2-1-koszul}. To do that, we apply the K\"unneth formula to
  \begin{equation}
    \Omega^1_F(1,1) = \Omega^1_Y(1) \boxtimes \mathcal{O}_{\mathbb{P}^1}(1) \oplus \mathcal{O}_Y(1) \boxtimes \mathcal{O}_{\mathbb{P}^1}(-1);
  \end{equation}
  the second term is acyclic since $\mathcal{O}_{\mathbb{P}^1}(-1)$ is, whilst for the first one we get
  \begin{equation}
    \label{equation:2-1-doubling}
    \hh^j(F,\Omega^1_F(1,1)) = 2\hh^j(Y,\Omega^1_Y(1)).
  \end{equation}
  To compute the latter, we consider the twisted conormal sequence for~$Y$, seen as a sextic hypersurface in~$\mathbb{P}$:
  \begin{equation}
    0 \to \mathcal{O}_Y(-5) \to \Omega^1_{\mathbb{P}}(1)|_Y \to \Omega^1_Y(1) \to 0.
  \end{equation}
  By \cite[\S2.3.2--2.3.5]{MR0704986}
the first term has only one non-vanishing cohomology~$\hh^3(Y,\mathcal{O}_Y(-5)) = 14$. Thus, we have
  \begin{equation}
    \label{equation:2-1-conormal-Y}
    \begin{aligned}
      \hh^0(Y,\Omega^1_Y(1))&=\hh^0(Y,\Omega^1_{\mathbb{P}}(1)|_Y) \\
      \hh^1(Y,\Omega^1_Y(1))&=\hh^1(\mathbb{P},\Omega^1_{\mathbb{P}}(1)|_Y) \\
      \hh^2(Y,\Omega^1_Y(1))-\hh^3(Y,\Omega^1_Y(1))&=14-\hh^3(\mathbb{P},\Omega^1_{\mathbb{P}}(1)|_Y)+\hh^2(\mathbb{P},\Omega^1_{\mathbb{P}}(1)|_Y).
    \end{aligned}
  \end{equation}
  The cohomologies of $\Omega^1_{\mathbb{P}}(1)|_Y$ can be obtained as usual by means of a twisted Koszul complex
  \begin{equation}
    0 \to \Omega^1_{\mathbb{P}}(-5) \to \Omega^1_{\mathbb{P}}(1) \to \Omega^1_{\mathbb{P}}(1)|_Y \to 0.
  \end{equation}
  Now \cite[\S2.3.2--2.3.5]{MR0704986}
yields the cohomologies for the first two terms, whence we deduce that
  \begin{equation}
    \begin{aligned}
      \hh^i(\mathbb{P},\Omega^1_{\mathbb{P}}(1)|_Y)&=0 \quad \mbox{for\ } i=0,1,2 \\
      \hh^3(\mathbb{P},\Omega^1_{\mathbb{P}}(1)|_Y)&=1.
    \end{aligned}
  \end{equation}
  We plug the last equalities into \eqref{equation:2-1-conormal-Y} and get
  \begin{equation}
    \begin{alignedat}{2}
      \hh^0(Y,\Omega^1_Y(1))&=0, & \qquad \hh^1(Y,\Omega^1_Y(1)) &= 0, \\
      \hh^2(Y,\Omega^1_Y(1))&=\alpha + 13, & \qquad\hh^3(Y,\Omega^1_Y(1))&= \alpha
    \end{alignedat}
  \end{equation}
  for some integer $\alpha\geq 0$. By \eqref{equation:2-1-doubling} and \eqref{equation:2-1-koszul} we get
  \begin{equation}
    \begin{alignedat}{2}
      \hh^0(F,\Omega^1_F(1,1)|_X)&=2, & \qquad\hh^1(F,\Omega^1_F(1,1)|_X)&= \beta, \\
      \hh^2(F,\Omega^1_F(1,1)|_X)&=\beta + 2\alpha + (2\cdot13-21), & \qquad\hh^3(F,\Omega^1_F(1,1)|_X)&=2\alpha
    \end{alignedat}
  \end{equation}
  for some integers $\alpha,\beta\geq0$. But by \eqref{equation:2-1-conormal} and \cref{lemma:kodaira} we get~$2\alpha=\hh^3(X,\bigwedge^2\tangent_X)=0$, hence the claim.

  \paragraph{Second step: blowup computation}
  We consider the short exact sequence cutting out~$Z$ inside~$Y$ and tensor it with~$\Omega_Y^1\otimes\omega_Y^\vee$ to obtain
  \begin{equation}
    0\to\mathcal{I}_Z\otimes\Omega_Y^1\otimes\omega_Y^\vee\to\Omega_Y^1\otimes\omega_Y^\vee\to(\Omega_Y^1\otimes\omega_Y^\vee)|_Z\to 0.
  \end{equation}
  The cohomology of the middle term is determined by the methods in \cref{subsection:toric} and is given by
  \begin{equation}
    \label{equation:2-1-to-1-11}
    \hh^i(Y,\Omega_Y^1\otimes\omega_Y^\vee)=3,0,7\quad\text{for $i=0,1,2$}.
  \end{equation}
  To compute the cohomology of the third term, consider the conormal sequence for~$Z$ in~$Y$ twisted by~$\omega_Y^\vee|_Z\cong\mathcal{O}_Z(2)$ (because~$Y$ is of index~2), which, since $Z$ is an elliptic curve, reads
  \begin{equation}
    0\to\mathcal{I}_Z/\mathcal{I}_Z^2\otimes\mathcal{O}_Z(2)\to(\Omega_Y^1\otimes\omega_Y^\vee)|_Z\to\mathcal{O}_Z(2)\to 0.
  \end{equation}
  As~$Z$ is cut out by two half-anticanonical divisors, we obtain that~$\mathcal{I}_Z/\mathcal{I}_Z^2\otimes\mathcal{O}_Z(2)\cong\mathcal{O}_Z(1)^{\oplus2}$. It now suffices to compute the cohomology of the line bundles~$\mathcal{O}_Z(1)$ and~$\mathcal{O}_Z(2)$, which is concentrated in degree~0 by degree reasons, and is determined by a Hilbert series computation on the weighted projective space. We obtain~$\hh^i(Z,(\Omega_Y^1\otimes\omega_Y^\vee)|_Z)=4,0$ for~$i=0,1$. Combining this with \eqref{equation:2-1-to-1-11} we get that
  \begin{equation}
    \label{equation:2-1-blowup-claim}
    \left\{
    \begin{aligned}
      \hh^0(X,\Omega_Y^1\otimes\omega_Y^\vee\otimes\mathcal{I}_Z)-\hh^1(X,\Omega_Y^1\otimes\omega_Y^\vee\otimes\mathcal{I}_Z)&=-1 \\
      \hh^2(X,\Omega_Y^1\otimes\omega_Y^\vee\otimes\mathcal{I}_Z)&=7.
    \end{aligned}
    \right.
  \end{equation}
  Together with \eqref{equation:2-1-weighted-claim} and \cref{corollary:blowup-identification} this finishes the computation.
\end{proof}

Next we tackle the underdetermined case \MM{2}{3} in exactly the same way. In the proof we will explain which details of the computation change.
\begin{proposition}
  \label{proposition:2-3}
  Let~$X$ be a Fano 3-fold in the deformation family~\MM{2}{3}. Then we have that~$\hh^i(X,\bigwedge^2\tangent_X)=1,3,1$ for~$i=0,1,2$.
\end{proposition}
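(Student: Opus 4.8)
The plan is to follow \emph{verbatim} the two-step strategy used for \cref{proposition:2-1}, recording only the inputs that change because of the different base. A member of \MM{2}{3} is the blowup $f\colon X\to Y$ of a Fano 3-fold $Y$ of deformation type \MM{1}{12} in an elliptic curve $Z$ cut out by two half-anticanonical divisors; now $Y$ is the quartic double solid, realised as a quartic hypersurface in $\mathbb{P}\colonequals\mathbb{P}(1^4,2)$. It has index $2$, so $\omega_Y^\vee\cong\mathcal{O}_Y(2)$, and its relevant invariants are $\rho(Y)=1$, $\hh^{1,2}(Y)=10$ and $H^3=2$ (where $H$ is the ample generator). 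As before, $X$ is simultaneously a $(1,1)$-section of $F\colonequals Y\times\mathbb{P}^1$, and the two descriptions feed the two halves of the argument.

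In the \emph{first step} I would run the same chain of twisted conormal and Koszul sequences as in the proof of \cref{proposition:2-1}, with three numerical inputs modified. Since $Y$ is a quartic rather than a sextic, the twisted conormal sheaf of $Y\subset\mathbb{P}$ becomes $\mathcal{O}_Y(-3)$ instead of $\mathcal{O}_Y(-5)$, and Dolgachev's formulae \cite[\S2.3.2--2.3.5]{MR0704986} give its unique non-vanishing cohomology as $\hh^3(Y,\mathcal{O}_Y(-3))=\hh^0(Y,\mathcal{O}_Y(1))=4$. Feeding this through the Koszul resolution of $\Omega^1_\mathbb{P}(1)|_Y$ and then the conormal sequence for $\Omega^1_Y(1)$ yields $\hh^j(Y,\Omega^1_Y(1))=0,0,4,0$ for $j=0,1,2,3$, the top group being forced to vanish by \cref{lemma:kodaira} exactly as the integer $\alpha$ was in \cref{proposition:2-1}. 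Combined with $\hh^i(F,\Omega^1_F)=0,2,10,0,0$ (from the Hodge numbers of $\mathbb{P}^1$ and \MM{1}{12}) and the K\"unneth identity $\hh^j(F,\Omega^1_F(1,1))=2\hh^j(Y,\Omega^1_Y(1))$, the diagram chase through the restriction sequence gives $\hh^0(X,\bigwedge^2\tangent_X)=1$ and $\hh^2(X,\bigwedge^2\tangent_X)-\hh^1(X,\bigwedge^2\tangent_X)=2\cdot4-10=-2$.

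In the \emph{second step} I invoke \cref{corollary:blowup-identification}, which identifies $\HH^\bullet(X,\bigwedge^2\tangent_X)$ with $\HH^\bullet(Y,\Omega^1_Y\otimes\omega_Y^\vee\otimes\mathcal{I}_Z)$, and I tensor the ideal sequence of $Z$ with $\Omega^1_Y\otimes\omega_Y^\vee=\Omega^1_Y(2)$. The cohomology of the middle term $\Omega^1_Y(2)=\bigwedge^2\tangent_Y$ is supplied by the toric method of \cref{subsection:toric} applied to \MM{1}{12}. For the restriction to $Z$ I use the twisted conormal sequence of the elliptic curve $Z\subset Y$: since $Z$ is a complete intersection of two members of $|\mathcal{O}_Y(1)|$ one has $\mathcal{I}_Z/\mathcal{I}_Z^2\otimes\mathcal{O}_Z(2)\cong\mathcal{O}_Z(1)^{\oplus2}$, and now $\deg\mathcal{O}_Z(1)=H^3=2$ (rather than $1$), so Riemann--Roch on the elliptic curve gives $\hh^i(Z,(\Omega^1_Y\otimes\omega_Y^\vee)|_Z)=8,0$. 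The resulting long exact sequence then pins down $\hh^2(X,\bigwedge^2\tangent_X)=\hh^2(Y,\Omega^1_Y(2))=1$. Together with the first step this forces $\hh^i(X,\bigwedge^2\tangent_X)=1,3,1$.

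As with \cref{proposition:2-1}, the main obstacle is the bookkeeping for the weighted projective space $\mathbb{P}(1^4,2)$: one must compute the twisted Hodge numbers $\hh^q(\mathbb{P},\Omega^1_\mathbb{P}(j))$ for the relevant twists via Dolgachev, taking care of the singularity of $\mathbb{P}$, and then assemble them correctly through the nested Koszul and conormal sequences. Everything else is the same diagram chase as before, with the two descriptions of $X$ conspiring so that the section computation determines $\hh^0$ and $\hh^2-\hh^1$ while the blowup computation determines $\hh^2$. The slight redundancy between the two provides a built-in consistency check, since the relation $\hh^0-\hh^1=\hh^0(Y,\Omega^1_Y(2))-\hh^1(Y,\Omega^1_Y(2))-8$ coming from the blowup side must agree with $\hh^0=1$ and $\hh^2-\hh^1=-2$ from the section side.
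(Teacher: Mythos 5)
Your proposal is correct and follows essentially the same route as the paper: the identical two-step argument of \cref{proposition:2-1} adapted to the quartic in $\mathbb{P}(1^4,2)$, with the same intermediate numbers ($\hh^3(Y,\mathcal{O}_Y(-3))=4$, $\hh^i(F,\Omega^1_F)=0,2,10,0,0$, the section side giving $\hh^0=1$ and $\hh^2-\hh^1=2\cdot 4-10=-2$, and the blowup side giving $\hh^i(Z,(\Omega^1_Y\otimes\omega_Y^\vee)|_Z)=8,0$ and $\hh^2=1$ via \cref{corollary:blowup-identification}). The only cosmetic difference is that you leave the toric input $\hh^i(Y,\Omega^1_Y\otimes\omega_Y^\vee)=6,0,1$ implicit, but your consistency check recovers exactly the paper's conclusion $\hh^i(X,\bigwedge^2\tangent_X)=1,3,1$.
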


\begin{proof}
  Such a variety is described as the blowup of a Fano 3-fold~$Y$ of deformation type~\MM{1}{12} in an elliptic curve obtained as complete intersection of two half-anticanonical divisors, and~$Y$ is given as a quartic hypersurface in the weighted projective space~$\mathbb{P}\colonequals\mathbb{P}(1^4,2)$.

  Alternatively,~$X$ is a~$(1,1)$-section of~$F=Y\times \mathbb{P}^1$. Computing~$\hh^i(X,\bigwedge^2\tangent_X) = \hh^i(X,\Omega^1_X(1,1))$ is analogous to the two-step procedure from \cref{proposition:2-1}, so we only summarise the relevant differences in the numerology appearing.

  \paragraph{First step: weighted projective space computation (abbreviated)}
  This goes along the same lines as the first step in the proof of \cref{proposition:2-1}, except that the claim \eqref{equation:2-1-weighted-claim} now becomes
  \begin{equation}
    \label{equation:2-3-weighted-claim}
    \left\{
    \begin{aligned}
      \hh^0(X,\Omega^1_X(1,1))&=1 \\
      \hh^2(X,\Omega^1_X(1,1))-\hh^1(X,\Omega^1_X(1,1))&=-2.
    \end{aligned}
   \right.
  \end{equation}
  From the Hodge numbers of~$\mathbb{P}^1$ and~\MM{1}{12} we now get~$\hh^i(F,\Omega^1_F)=0,2,10,0,0$ for~$i=0,1,2,3,4$. The twisted conormal sequence for~$Y$, which is a quartic hypersurface in~$\mathbb{P}$, gives~$\hh^3(Y,\mathcal{O}_Y(-3)) = 4$. We obtain
  \begin{equation}
    \begin{alignedat}{2}
      \hh^0(Y,\Omega^1_Y(1))&= 0, &\qquad \hh^1(Y,\Omega^1_Y(1))&= 0, \\
      \hh^2(Y,\Omega^1_Y(1))&= \alpha + 4, & \qquad\hh^3(Y,\Omega^1_Y(1))&= \alpha
    \end{alignedat}
  \end{equation}
  for some integer $\alpha\geq 0$. As in the proof of \cref{proposition:2-1} we get
  \begin{equation}
    \begin{alignedat}{2}
      \hh^0(X,\Omega^1_F(1,1)|_X)&=2, & \qquad\hh^1(X,\Omega^1_F(1,1)|_X) &= \beta, \\
      \hh^2(X,\Omega^1_F(1,1)|_X)&=\beta + 2\alpha + (2\cdot4-10), & \qquad\hh^3(X,\Omega^1_F(1,1)|_X)&=2\alpha
    \end{alignedat}
  \end{equation}
  for some integers $\alpha, \beta\geq0$. But by \cref{lemma:kodaira} we get $2\alpha=\hh^3(X,\bigwedge^2\tangent_X)=0$, hence the claim.

  \paragraph{Second step: blowup computation}
  This goes along the same lines as the second step in the proof of \cref{proposition:2-1}, except that \eqref{equation:2-1-to-1-11} now becomes
  \begin{equation}
    \label{equation:2-3-to-1-12}
    \hh^i(Y,\Omega_Y^1\otimes\omega_Y^\vee)=6,0,1\quad\text{for $i=0,1,2$}
  \end{equation}
  and that the Hilbert series computation is performed for an elliptic curve in the weighted projective space~$\mathbb{P}(1^4,2)$. We obtain~$\hh^i(Z,(\Omega_Y^1\otimes\omega_Y^\vee)|_Z)=8,0$ for~$i=0,1$. Combining this with \eqref{equation:2-3-to-1-12} we get that
  \begin{equation}
    \left\{
    \label{equation:2-3-blowup-claim}
    \begin{aligned}
      \hh^0(Y,\Omega_Y^1\otimes\omega_Y^\vee\otimes\mathcal{I}_Z)-\hh^1(Y,\Omega_Y^1\otimes\omega_Y^\vee\otimes\mathcal{I}_Z)&=-2 \\
      \hh^2(Y,\Omega_Y^1\otimes\omega_Y^\vee\otimes\mathcal{I}_Z)&=1.
    \end{aligned}
    \right.
  \end{equation}
  Together with \eqref{equation:2-3-weighted-claim} and \cref{corollary:blowup-identification} this finishes the computation.
\end{proof}

The next underdetermined case is the Fano 3-fold which was originally missed in the classification \cite{MR1969009}. The method is similar to what we did for \MM{2}{1} and \MM{2}{3} using the birational description, but requires less work.
\begin{proposition}
  \label{proposition:4-13}
  Let~$X$ be a Fano 3-fold in the deformation family~\MM{4}{13}. Then we have that~$\hh^i(X,\bigwedge^2\tangent_X)=4,0,0$ for~$i=0,1,2$.
\end{proposition}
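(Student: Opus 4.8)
The plan is to avoid any direct computation on $X$ and instead use the birational description: $\MM{4}{13}$ is the blowup $f\colon X\to Y$ of $Y\colonequals\mathbb{P}^1\times\mathbb{P}^1\times\mathbb{P}^1$ along a smooth curve $Z$ of tridegree $(1,1,3)$, which is exactly the situation of \eqref{equation:blowup-setup}. By \cref{corollary:blowup-identification} it then suffices to compute
\[
  \HH^\bullet(Y,\Omega_Y^1\otimes\omega_Y^\vee\otimes\mathcal{I}_Z).
\]
This is where the promised simplification over \MM{2}{1} and \MM{2}{3} occurs: $Y$ is smooth (no Dolgachev formulae for weighted projective spaces are needed), its cotangent bundle splits completely, and, crucially, the projections of $Z$ to the first two factors have degree~$1$, so $Z$ is a smooth \emph{rational} curve.

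Writing $\mathcal{F}\colonequals\Omega_Y^1\otimes\omega_Y^\vee$ and using $\omega_Y^\vee\cong\mathcal{O}_Y(2,2,2)$ together with $\Omega_Y^1\cong\mathcal{O}_Y(-2,0,0)\oplus\mathcal{O}_Y(0,-2,0)\oplus\mathcal{O}_Y(0,0,-2)$, one gets
\[
  \mathcal{F}\cong\mathcal{O}_Y(0,2,2)\oplus\mathcal{O}_Y(2,0,2)\oplus\mathcal{O}_Y(2,2,0).
\]
By the K\"unneth formula each summand has cohomology concentrated in degree~$0$ of dimension $1\cdot3\cdot3=9$, so $\hh^0(Y,\mathcal{F})=27$ and $\HH^{\geq1}(Y,\mathcal{F})=0$. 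Since $Z\cong\mathbb{P}^1$ and $\mathcal{O}_Y(a,b,c)|_Z\cong\mathcal{O}_{\mathbb{P}^1}(a+b+3c)$, restriction gives $\mathcal{F}|_Z\cong\mathcal{O}_{\mathbb{P}^1}(8)^{\oplus2}\oplus\mathcal{O}_{\mathbb{P}^1}(4)$, whence $\hh^0(Z,\mathcal{F}|_Z)=9+9+5=23$ and $\hh^1(Z,\mathcal{F}|_Z)=0$.

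I then feed these into the long exact sequence obtained by tensoring $0\to\mathcal{I}_Z\to\mathcal{O}_Y\to\mathcal{O}_Z\to0$ with the locally free $\mathcal{F}$. Because $\HH^{\geq1}(Y,\mathcal{F})=0$ and $Z$ is a curve, the sequence yields at once $\HH^2(Y,\mathcal{F}\otimes\mathcal{I}_Z)\cong\HH^1(Z,\mathcal{F}|_Z)=0$ and $\HH^{\geq3}=0$, so that $\hh^2(X,\bigwedge^2\tangent_X)=0$; in low degrees everything is controlled by the restriction map $r\colon\HH^0(Y,\mathcal{F})\to\HH^0(Z,\mathcal{F}|_Z)$ via $\hh^0(X,\bigwedge^2\tangent_X)=\dim\ker r$ and $\hh^1(X,\bigwedge^2\tangent_X)=\dim\operatorname{coker}r$. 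As the Euler characteristic is $27-23=4$, the desired values $(4,0,0)$ are equivalent to the single assertion that $r$ is surjective.

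Establishing this surjectivity is the only non-formal point. Splitting $r$ along the three summands, the map out of $\mathcal{O}_Y(2,2,0)$ is visibly onto, since its sections are the bidegree-$(2,2)$ forms in the first two coordinates and these restrict to all of $\HH^0(\mathbb{P}^1,\mathcal{O}(4))$; so the content is that the two maps $\HH^0(\mathcal{O}_Y(0,2,2))\to\HH^0(\mathcal{O}_{\mathbb{P}^1}(8))$ and $\HH^0(\mathcal{O}_Y(2,0,2))\to\HH^0(\mathcal{O}_{\mathbb{P}^1}(8))$, each $9\to9$, are isomorphisms. I would prove this either by a maximal-rank argument for a general degree-$3$ parametrisation of $Z$, or, more robustly and uniformly, by presenting $Z$ as the complete intersection of divisors $D_1,D_2$ of classes $(1,1,0)$ and $(2,1,1)$, resolving $\mathcal{I}_Z$ by the associated Koszul complex, and computing $\HH^\bullet(Y,\mathcal{F}\otimes\mathcal{I}_Z)$ directly by K\"unneth. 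In the latter route every relevant twist of $\mathcal{F}$ is acyclic except for a single $4$-dimensional $\HH^0$ coming from $\mathcal{F}(-D_1)$ and two $4$-dimensional $\HH^1$'s coming from $\mathcal{F}(-D_2)$ and $\mathcal{F}(-D_1-D_2)$, and the whole computation collapses to checking that the induced map $\HH^1(Y,\mathcal{F}(-D_1-D_2))\to\HH^1(Y,\mathcal{F}(-D_2))$ — which is multiplication by the bilinear form $f_1$ cutting out the $(1,1,0)$-divisor — is an isomorphism. This reduces to the nondegeneracy of $f_1$, which is forced by the smoothness of $Z$ (hence of $X$), and that nondegeneracy is the genuine heart of the argument.
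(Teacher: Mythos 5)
Your proposal is correct, and its core computation follows a genuinely different route from the paper's. The shared part is the reduction via \cref{corollary:blowup-identification} and the splitting $\mathcal{F}\colonequals\Omega_Y^1\otimes\omega_Y^\vee\cong\mathcal{O}_Y(0,2,2)\oplus\mathcal{O}_Y(2,0,2)\oplus\mathcal{O}_Y(2,2,0)$. After that, the paper never introduces your restriction map $r$: each summand of $\mathcal{F}$ is pulled back along a projection $p_{i,j}\colon Y\to\mathbb{P}^1\times\mathbb{P}^1$, and since $p_{i,j,*}\mathcal{I}_Z=\mathcal{I}_{Z_{i,j}}$ with $Z_{i,j}$ a divisor of bidegree $(1,1)$, resp.\ $(1,3)$, $(1,3)$ (the maps $Z\to Z_{i,j}$ are isomorphisms, as an irreducible curve of these bidegrees has arithmetic genus $0$), the projection formula collapses everything to $\HH^\bullet(\mathbb{P}^1\times\mathbb{P}^1,\mathcal{O}(1,1))$, contributing the $4$ in degree $0$, and $\HH^\bullet(\mathbb{P}^1\times\mathbb{P}^1,\mathcal{O}(1,-1))^{\oplus 2}=0$. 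That vanishing is exactly your assertion that the two $9\to 9$ components of $r$ are isomorphisms: their kernel and cokernel are $\HH^0$ and $\HH^1$ of $\mathcal{O}(1,-1)$, so the paper's pushforward trick renders your ``only non-formal point'' formal. Your Koszul alternative, with $D_1\in|\mathcal{O}_Y(1,1,0)|$ and $D_2\in|\mathcal{O}_Y(2,1,1)|$, is nevertheless complete and correct; I checked the bookkeeping: the only nonacyclic summands are $\mathcal{O}_Y(1,1,0)\subset\mathcal{F}(-D_1)$ with $\hh^0=4$, $\mathcal{O}_Y(-2,1,1)\subset\mathcal{F}(-D_2)$ with $\hh^1=4$, and $\mathcal{O}_Y(-3,0,1)\subset\mathcal{F}(-D_1-D_2)$ with $\hh^1=4$, and via Serre duality on the first factor the multiplication-by-$f_1$ map $\HH^1(\mathcal{O}_Y(-3,0,1))\to\HH^1(\mathcal{O}_Y(-2,1,1))$ is contraction against the $2\times 2$ matrix of $f_1$ tensored with the identity on the third-factor part, hence an isomorphism exactly when $f_1$ is nondegenerate, yielding $(4,0,0)$.

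Two refinements would make your write-up airtight. First, the ``maximal-rank argument for a general degree-$3$ parametrisation'' would establish the result only for \emph{general} members of the family, whereas \cref{theorem:main-theorem} asserts it for all of them; so the Koszul (or the paper's pushforward) argument must carry the proof. Second, nondegeneracy of $f_1$ is not a consequence of smoothness of $Z$ per se, but of irreducibility together with the tridegree: if $f_1=\ell(x)\,m(y)$ had rank one, then $D_1=\{\ell=0\}\cup\{m=0\}$, so the irreducible curve $Z=D_1\cap D_2$ would lie in one of these two pieces and its projection to the first or second factor would be constant, contradicting tridegree $(1,1,3)$. Since $Z$ is smooth and connected, hence irreducible, this closes the argument.
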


\begin{proof}
  Such a variety is described as the blowup of the Fano 3-fold~$Y=\mathbb{P}^1\times\mathbb{P}^1\times\mathbb{P}^1$ of deformation type \MM{3}{27} in a rational curve~$Z$ of tridegree~$(1,1,3)$. The curve~$Z$ is given as a complete intersection of type~$(2,1,1)$ and~$(1,1,0)$, see \cite[\S86]{MR3470714}.

  By \cref{corollary:blowup-identification} we want to compute~$\HH^\bullet(Y,\Omega_Y^1\otimes\omega_Y^\vee\otimes\mathcal{I}_Z)$. Denote by~$p_{i,j}\colon Y\to\mathbb{P}^1\times\mathbb{P}^1$ the three projections. Using the isomorphism
  \begin{equation}
    \Omega_Y^1\otimes\omega_Y^\vee\cong\bigoplus_{1\leq i<j\leq 3}p_{i,j}^*(\mathcal{O}_{\mathbb{P}^1\times\mathbb{P}^1}(2,2))
  \end{equation}
  and the projection formula we want to compute the cohomology of
  \begin{equation}
    \mathcal{O}_{\mathbb{P}^1\times\mathbb{P}^1}(2,2)\otimes p_{i,j,*}\mathcal{I}_Z.
  \end{equation}
  But~$p_{i,j,*}\mathcal{I}_Z=\mathcal{I}_{Z_{i,j}}$ where~$Z_{i,j}$ is the image of~$Z$ under~$p_{i,j}$, which is in turn a divisor of bidegree~$(1,1)$, resp.~$(1,3)$ and~$(1,3)$ on~$\mathbb{P}^1\times\mathbb{P}^1$. This reduces the computation to the cohomology of~$\mathcal{O}_{\mathbb{P}^1\times\mathbb{P}^1}(1,1)$ and~$\mathcal{O}_{\mathbb{P}^1\times\mathbb{P}^1}(1,-1)^{\oplus2}$, where the latter is cohomology-free and the former has cohomology concentrated in degree~0, where it is 4-dimensional.
\end{proof}

The final case is the product of a del Pezzo surface of degree~1 with~$\mathbb{P}^1$, and the computation is immediate (e.g.~using the description of \cref{section:surfaces} and the K\"unneth formula).

\begin{proposition}
  \label{proposition:10-1}
  Let~$X$ be a Fano 3-fold in the deformation family~\MM{10}{1}. Then we have that~$\hh^i(X,\bigwedge^2\tangent_X)=2,24,0$ for~$i=0,1,2$.
\end{proposition}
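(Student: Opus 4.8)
The plan is to exploit the product structure and reduce everything to cohomology on the two factors via the K\"unneth formula. By \cref{section:surfaces} (and the discussion preceding the statement) we have~$X \cong S \times \mathbb{P}^1$, where~$S$ is a del Pezzo surface of degree~$1$, i.e.\ the blowup of~$\mathbb{P}^2$ in~$8$ points. Writing~$\mathrm{pr}_S$ and~$\mathrm{pr}_{\mathbb{P}^1}$ for the two projections, the tangent bundle of a product splits as~$\tangent_X \cong \mathrm{pr}_S^*\tangent_S \oplus \mathrm{pr}_{\mathbb{P}^1}^*\tangent_{\mathbb{P}^1}$; since~$\tangent_{\mathbb{P}^1}$ has rank~$1$, taking second exterior powers and using~$\bigwedge^2\tangent_S \cong \omega_S^\vee$ on the surface~$S$ yields
\begin{equation}
\begin{aligned}
  \bigwedge^2\tangent_X
  &\cong \mathrm{pr}_S^*\bigl(\bigwedge^2\tangent_S\bigr) \oplus \bigl(\mathrm{pr}_S^*\tangent_S \otimes \mathrm{pr}_{\mathbb{P}^1}^*\tangent_{\mathbb{P}^1}\bigr) \\
  &\cong \bigl(\omega_S^\vee \boxtimes \mathcal{O}_{\mathbb{P}^1}\bigr) \oplus \bigl(\tangent_S \boxtimes \tangent_{\mathbb{P}^1}\bigr).
\end{aligned}
\end{equation}

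Next I would apply K\"unneth to each of the two summands. Since~$\HH^\bullet(\mathbb{P}^1,\mathcal{O}_{\mathbb{P}^1})$ is~$1$-dimensional in degree~$0$, and~$\HH^\bullet(\mathbb{P}^1,\tangent_{\mathbb{P}^1}) = \HH^\bullet(\mathbb{P}^1,\mathcal{O}_{\mathbb{P}^1}(2))$ is~$3$-dimensional in degree~$0$ and vanishes in higher degree, the bigraded K\"unneth sum collapses to
\begin{equation}
  \hh^n(X,\bigwedge^2\tangent_X) = \hh^n(S,\omega_S^\vee) + 3\,\hh^n(S,\tangent_S), \qquad n = 0,1,2.
\end{equation}
This reduces the proposition to two standard cohomology computations on the surface~$S$.

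For~$\omega_S^\vee$, Kodaira vanishing kills~$\HH^{\geq 1}$, and Riemann--Roch on a surface gives~$\chi(S,\omega_S^\vee) = \chi(\mathcal{O}_S) + \tfrac12(-K_S)(-2K_S) = 1 + (-K_S)^2 = 2$ as the degree is~$1$, so~$\hh^\bullet(S,\omega_S^\vee) = 2,0,0$. For~$\tangent_S \cong \Omega_S^1\otimes\omega_S^\vee$, the top group~$\hh^2(S,\tangent_S)$ vanishes by Kodaira--Akizuki--Nakano, while~$\hh^0(S,\tangent_S) = \dim\Aut^0(S) = 0$ since a del Pezzo surface of degree~$\leq 5$ has finite automorphism group; Hirzebruch--Riemann--Roch then gives~$\chi(S,\tangent_S) = \tfrac{7}{6}K_S^2 - \tfrac{5}{6}e(S) = \tfrac{7}{6} - \tfrac{55}{6} = -8$ (using~$K_S^2 = 1$ and topological Euler characteristic~$e(S) = 11$), whence~$\hh^\bullet(S,\tangent_S) = 0,8,0$. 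The only genuinely non-formal input is this value~$\hh^1(S,\tangent_S) = 8$, the dimension of the deformation space of a degree-$1$ del Pezzo (equivalently the~$2\cdot 8 - \dim\PGL_3 = 8$ moduli of eight points in~$\mathbb{P}^2$); this is where I would be most careful, everything else being formal. Substituting both results into the displayed formula gives~$\hh^i(X,\bigwedge^2\tangent_X) = 2,24,0$ for~$i = 0,1,2$, as claimed.
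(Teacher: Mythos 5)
Your proof is correct and is precisely the argument the paper intends: the paper declares the case \MM{10}{1} immediate from the del Pezzo data in \cref{section:surfaces} together with the K\"unneth formula, and your splitting $\bigwedge^2\tangent_X\cong(\omega_S^\vee\boxtimes\mathcal{O}_{\mathbb{P}^1})\oplus(\tangent_S\boxtimes\tangent_{\mathbb{P}^1})$ merely spells out the details it leaves implicit. Your surface inputs $\hh^\bullet(S,\omega_S^\vee)=2,0,0$ and $\hh^\bullet(S,\tangent_S)=0,8,0$ agree with the values tabulated for $\Bl_8\mathbb{P}^2$ in \cref{section:surfaces}, so the conclusion $2+3\cdot 0,\;0+3\cdot 8,\;0+3\cdot 0=2,24,0$ matches the paper exactly.
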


\clearpage

\newgeometry{left=1.5cm, right=1.5cm, top=1.5cm, bottom=1.5cm}

\appendix
\section{Dimensions for Fano 3-folds}
\label{section:values}
In this appendix we have collected all the polyvector parallelograms of Fano 3-folds. The notation we use for this is introduced in \eqref{equation:parallelogram}: writing~$\pv^{p,q}\colonequals\dim_k\HH^p(X,\bigwedge^q\tangent_X)$, and using that~$X$ is a Fano 3-fold, we can summarise the Hochschild--Kostant--Rosenberg decomposition of Hochschild cohomology as
\begin{equation}
  \arraycolsep=10pt
  \renewcommand\arraystretch{1.2}
  \begin{array}{*5{>{}c}} {\scriptstyle\HHHH^0(X)} & 1 \\ {\scriptstyle\HHHH^1(X)} & 0 & \pv^{0,1} \\ {\scriptstyle\HHHH^2(X)} & 0 & \pv^{1,1} & \pv^{0,2} \\ {\scriptstyle\HHHH^3(X)} & 0 & 0 & \pv^{1,2} & \pv^{0,3} \\ {\scriptstyle\HHHH^4(X)} & & 0 & \pv^{2,2} & 0 \\ {\scriptstyle\HHHH^5(X)} & && 0 & 0 \\ {\scriptstyle\HHHH^6(X)} & &&& 0 \end{array}
\end{equation}
In the case of jumping of~$\hh^i(X,\tangent_X)$, we have given the lowest value, and indicated with a * next to the value~$\hh^0(X,\tangent_X)$ that jumping occurs.

This information (and more) is also available in a more interactive way at \cite{fanography}.

\arraycolsep=3pt
\paragraph{Rank 1}
\begin{flushleft}
\begin{minipage}{.166\linewidth}
  \footnotesize
  \centering
  $\parallelogramthreefold{0}{68}{0}{0}{35}{4}$
  {Polyvectors \\ of \MM{1}{1}}
\end{minipage}\begin{minipage}{.166\linewidth}
  \footnotesize
  \centering
  $\parallelogramthreefold{0}{45}{0}{0}{15}{5}$
  {Polyvectors \\ of \MM{1}{2}}
\end{minipage}\begin{minipage}{.166\linewidth}
  \footnotesize
  \centering
  $\parallelogramthreefold{0}{34}{0}{0}{7}{6}$
  {Polyvectors \\ of \MM{1}{3}}
\end{minipage}\begin{minipage}{.166\linewidth}
  \footnotesize
  \centering
  $\parallelogramthreefold{0}{27}{0}{0}{3}{7}$
  {Polyvectors \\ of \MM{1}{4}}
\end{minipage}\begin{minipage}{.166\linewidth}
  \footnotesize
  \centering
  $\parallelogramthreefold{0}{22}{0}{0}{1}{8}$
  {Polyvectors \\ of \MM{1}{5}}
\end{minipage}\begin{minipage}{.166\linewidth}
  \footnotesize
  \centering
  $\parallelogramthreefold{0}{18}{0}{0}{0}{9}$
  {Polyvectors \\ of \MM{1}{6}}
\end{minipage}\vspace{0.5cm}
\begin{minipage}{.166\linewidth}
  \footnotesize
  \centering
  $\parallelogramthreefold{0}{15}{0}{0}{0}{10}$
  {Polyvectors \\ of \MM{1}{7}}
\end{minipage}\begin{minipage}{.166\linewidth}
  \footnotesize
  \centering
  $\parallelogramthreefold{0}{12}{0}{0}{0}{11}$
  {Polyvectors \\ of \MM{1}{8}}
\end{minipage}\begin{minipage}{.166\linewidth}
  \footnotesize
  \centering
  $\parallelogramthreefold{0}{10}{1}{0}{0}{12}$
  {Polyvectors \\ of \MM{1}{9}}
\end{minipage}\begin{minipage}{.166\linewidth}
  \footnotesize
  \centering
  $\parallelogramthreefold{0^*}{6}{3}{0}{0}{14}$
  {Polyvectors \\ of \MM{1}{10}}
\end{minipage}\begin{minipage}{.166\linewidth}
  \footnotesize
  \centering
  $\parallelogramthreefold{0}{34}{3}{0}{7}{7}$
  {Polyvectors \\ of \MM{1}{11}}
\end{minipage}\begin{minipage}{.166\linewidth}
  \footnotesize
  \centering
  $\parallelogramthreefold{0}{19}{6}{0}{1}{11}$
  {Polyvectors \\ of \MM{1}{12}}
\end{minipage}\vspace{0.5cm}
\begin{minipage}{.166\linewidth}
  \footnotesize
  \centering
  $\parallelogramthreefold{0}{10}{10}{0}{0}{15}$
  {Polyvectors \\ of \MM{1}{13}}
\end{minipage}\begin{minipage}{.166\linewidth}
  \footnotesize
  \centering
  $\parallelogramthreefold{0}{3}{15}{0}{0}{19}$
  {Polyvectors \\ of \MM{1}{14}}
\end{minipage}\begin{minipage}{.166\linewidth}
  \footnotesize
  \centering
  $\parallelogramthreefold{3}{0}{21}{0}{0}{23}$
  {Polyvectors \\ of \MM{1}{15}}
\end{minipage}\begin{minipage}{.166\linewidth}
  \footnotesize
  \centering
  $\parallelogramthreefold{10}{0}{35}{0}{0}{30}$
  {Polyvectors \\ of \MM{1}{16}}
\end{minipage}\begin{minipage}{.166\linewidth}
  \footnotesize
  \centering
  $\parallelogramthreefold{15}{0}{45}{0}{0}{35}$
  {Polyvectors \\ of \MM{1}{17}}
\end{minipage}\end{flushleft}

\clearpage
\paragraph{Rank 2}
\begin{flushleft}
\begin{minipage}{.166\linewidth}
  \footnotesize
  \centering
  $\parallelogramthreefold{0}{36}{1}{2}{7}{5}$
  {Polyvectors \\ of \MM{2}{1}}
\end{minipage}\begin{minipage}{.166\linewidth}
  \footnotesize
  \centering
  $\parallelogramthreefold{0}{33}{0}{0}{6}{6}$
  {Polyvectors \\ of \MM{2}{2}}
\end{minipage}\begin{minipage}{.166\linewidth}
  \footnotesize
  \centering
  $\parallelogramthreefold{0}{23}{1}{3}{1}{7}$
  {Polyvectors \\ of \MM{2}{3}}
\end{minipage}\begin{minipage}{.166\linewidth}
  \footnotesize
  \centering
  $\parallelogramthreefold{0}{21}{0}{0}{0}{8}$
  {Polyvectors \\ of \MM{2}{4}}
\end{minipage}\begin{minipage}{.166\linewidth}
  \footnotesize
  \centering
  $\parallelogramthreefold{0}{16}{1}{3}{0}{9}$
  {Polyvectors \\ of \MM{2}{5}}
\end{minipage}\begin{minipage}{.166\linewidth}
  \footnotesize
  \centering
  $\parallelogramthreefold{0}{19}{0}{0}{1}{9}$
  {Polyvectors \\ of \MM{2}{6}}
\end{minipage}\vspace{0.5cm}
\begin{minipage}{.166\linewidth}
  \footnotesize
  \centering
  $\parallelogramthreefold{0}{14}{0}{1}{0}{10}$
  {Polyvectors \\ of \MM{2}{7}}
\end{minipage}\begin{minipage}{.166\linewidth}
  \footnotesize
  \centering
  $\parallelogramthreefold{0}{18}{3}{1}{1}{10}$
  {Polyvectors \\ of \MM{2}{8}}
\end{minipage}\begin{minipage}{.166\linewidth}
  \footnotesize
  \centering
  $\parallelogramthreefold{0}{13}{1}{0}{0}{11}$
  {Polyvectors \\ of \MM{2}{9}}
\end{minipage}\begin{minipage}{.166\linewidth}
  \footnotesize
  \centering
  $\parallelogramthreefold{0}{11}{1}{2}{0}{11}$
  {Polyvectors \\ of \MM{2}{10}}
\end{minipage}\begin{minipage}{.166\linewidth}
  \footnotesize
  \centering
  $\parallelogramthreefold{0}{12}{3}{0}{0}{12}$
  {Polyvectors \\ of \MM{2}{11}}
\end{minipage}\begin{minipage}{.166\linewidth}
  \footnotesize
  \centering
  $\parallelogramthreefold{0}{9}{3}{0}{0}{13}$
  {Polyvectors \\ of \MM{2}{12}}
\end{minipage}\vspace{0.5cm}
\begin{minipage}{.166\linewidth}
  \footnotesize
  \centering
  $\parallelogramthreefold{0}{8}{2}{0}{0}{13}$
  {Polyvectors \\ of \MM{2}{13}}
\end{minipage}\begin{minipage}{.166\linewidth}
  \footnotesize
  \centering
  $\parallelogramthreefold{0}{7}{1}{0}{0}{13}$
  {Polyvectors \\ of \MM{2}{14}}
\end{minipage}\begin{minipage}{.166\linewidth}
  \footnotesize
  \centering
  $\parallelogramthreefold{0}{9}{6}{0}{0}{14}$
  {Polyvectors \\ of \MM{2}{15}}
\end{minipage}\begin{minipage}{.166\linewidth}
  \footnotesize
  \centering
  $\parallelogramthreefold{0}{7}{4}{0}{0}{14}$
  {Polyvectors \\ of \MM{2}{16}}
\end{minipage}\begin{minipage}{.166\linewidth}
  \footnotesize
  \centering
  $\parallelogramthreefold{0}{5}{5}{0}{0}{15}$
  {Polyvectors \\ of \MM{2}{17}}
\end{minipage}\begin{minipage}{.166\linewidth}
  \footnotesize
  \centering
  $\parallelogramthreefold{0}{6}{6}{0}{0}{15}$
  {Polyvectors \\ of \MM{2}{18}}
\end{minipage}\vspace{0.5cm}
\begin{minipage}{.166\linewidth}
  \footnotesize
  \centering
  $\parallelogramthreefold{0}{5}{8}{0}{0}{16}$
  {Polyvectors \\ of \MM{2}{19}}
\end{minipage}\begin{minipage}{.166\linewidth}
  \footnotesize
  \centering
  $\parallelogramthreefold{0^*}{3}{6}{0}{0}{16}$
  {Polyvectors \\ of \MM{2}{20}}
\end{minipage}\begin{minipage}{.166\linewidth}
  \footnotesize
  \centering
  $\parallelogramthreefold{0^*}{2}{8}{0}{0}{17}$
  {Polyvectors \\ of \MM{2}{21}}
\end{minipage}\begin{minipage}{.166\linewidth}
  \footnotesize
  \centering
  $\parallelogramthreefold{0^*}{1}{10}{0}{0}{18}$
  {Polyvectors \\ of \MM{2}{22}}
\end{minipage}\begin{minipage}{.166\linewidth}
  \footnotesize
  \centering
  $\parallelogramthreefold{0}{2}{11}{0}{0}{18}$
  {Polyvectors \\ of \MM{2}{23}}
\end{minipage}\begin{minipage}{.166\linewidth}
  \footnotesize
  \centering
  $\parallelogramthreefold{0^*}{1}{10}{0}{0}{18}$
  {Polyvectors \\ of \MM{2}{24}}
\end{minipage}\vspace{0.5cm}
\begin{minipage}{.166\linewidth}
  \footnotesize
  \centering
  $\parallelogramthreefold{0}{1}{13}{0}{0}{19}$
  {Polyvectors \\ of \MM{2}{25}}
\end{minipage}\begin{minipage}{.166\linewidth}
  \footnotesize
  \centering
  $\parallelogramthreefold{1^*}{0}{14}{0}{0}{20}$
  {Polyvectors \\ of \MM{2}{26}}
\end{minipage}\begin{minipage}{.166\linewidth}
  \footnotesize
  \centering
  $\parallelogramthreefold{3}{0}{18}{0}{0}{22}$
  {Polyvectors \\ of \MM{2}{27}}
\end{minipage}\begin{minipage}{.166\linewidth}
  \footnotesize
  \centering
  $\parallelogramthreefold{4}{1}{21}{0}{0}{23}$
  {Polyvectors \\ of \MM{2}{28}}
\end{minipage}\begin{minipage}{.166\linewidth}
  \footnotesize
  \centering
  $\parallelogramthreefold{4}{0}{20}{0}{0}{23}$
  {Polyvectors \\ of \MM{2}{29}}
\end{minipage}\begin{minipage}{.166\linewidth}
  \footnotesize
  \centering
  $\parallelogramthreefold{7}{0}{26}{0}{0}{26}$
  {Polyvectors \\ of \MM{2}{30}}
\end{minipage}\vspace{0.5cm}
\begin{minipage}{.166\linewidth}
  \footnotesize
  \centering
  $\parallelogramthreefold{7}{0}{26}{0}{0}{26}$
  {Polyvectors \\ of \MM{2}{31}}
\end{minipage}\begin{minipage}{.166\linewidth}
  \footnotesize
  \centering
  $\parallelogramthreefold{8}{0}{28}{0}{0}{27}$
  {Polyvectors \\ of \MM{2}{32}}
\end{minipage}\begin{minipage}{.166\linewidth}
  \footnotesize
  \centering
  $\parallelogramthreefold{11}{0}{34}{0}{0}{30}$
  {Polyvectors \\ of \MM{2}{33}}
\end{minipage}\begin{minipage}{.166\linewidth}
  \footnotesize
  \centering
  $\parallelogramthreefold{11}{0}{34}{0}{0}{30}$
  {Polyvectors \\ of \MM{2}{34}}
\end{minipage}\begin{minipage}{.166\linewidth}
  \footnotesize
  \centering
  $\parallelogramthreefold{12}{0}{36}{0}{0}{31}$
  {Polyvectors \\ of \MM{2}{35}}
\end{minipage}\begin{minipage}{.166\linewidth}
  \footnotesize
  \centering
  $\parallelogramthreefold{15}{0}{42}{0}{0}{34}$
  {Polyvectors \\ of \MM{2}{36}}
\end{minipage}\vspace{0.5cm}
\end{flushleft}

\clearpage
\paragraph{Rank 3}
\begin{flushleft}
\begin{minipage}{.166\linewidth}
  \footnotesize
  \centering
  $\parallelogramthreefold{0}{17}{0}{2}{1}{9}$
  {Polyvectors \\ of \MM{3}{1}}
\end{minipage}\begin{minipage}{.166\linewidth}
  \footnotesize
  \centering
  $\parallelogramthreefold{0}{11}{2}{6}{0}{10}$
  {Polyvectors \\ of \MM{3}{2}}
\end{minipage}\begin{minipage}{.166\linewidth}
  \footnotesize
  \centering
  $\parallelogramthreefold{0}{9}{0}{0}{0}{12}$
  {Polyvectors \\ of \MM{3}{3}}
\end{minipage}\begin{minipage}{.166\linewidth}
  \footnotesize
  \centering
  $\parallelogramthreefold{0}{8}{2}{3}{0}{12}$
  {Polyvectors \\ of \MM{3}{4}}
\end{minipage}\begin{minipage}{.166\linewidth}
  \footnotesize
  \centering
  $\parallelogramthreefold{0^*}{5}{3}{4}{0}{13}$
  {Polyvectors \\ of \MM{3}{5}}
\end{minipage}\begin{minipage}{.166\linewidth}
  \footnotesize
  \centering
  $\parallelogramthreefold{0}{5}{2}{0}{0}{14}$
  {Polyvectors \\ of \MM{3}{6}}
\end{minipage}\vspace{0.5cm}
\begin{minipage}{.166\linewidth}
  \footnotesize
  \centering
  $\parallelogramthreefold{0}{4}{4}{0}{0}{15}$
  {Polyvectors \\ of \MM{3}{7}}
\end{minipage}\begin{minipage}{.166\linewidth}
  \footnotesize
  \centering
  $\parallelogramthreefold{0^*}{3}{3}{0}{0}{15}$
  {Polyvectors \\ of \MM{3}{8}}
\end{minipage}\begin{minipage}{.166\linewidth}
  \footnotesize
  \centering
  $\parallelogramthreefold{1}{6}{8}{0}{0}{16}$
  {Polyvectors \\ of \MM{3}{9}}
\end{minipage}\begin{minipage}{.166\linewidth}
  \footnotesize
  \centering
  $\parallelogramthreefold{0^*}{2}{5}{0}{0}{16}$
  {Polyvectors \\ of \MM{3}{10}}
\end{minipage}\begin{minipage}{.166\linewidth}
  \footnotesize
  \centering
  $\parallelogramthreefold{0}{2}{8}{0}{0}{17}$
  {Polyvectors \\ of \MM{3}{11}}
\end{minipage}\begin{minipage}{.166\linewidth}
  \footnotesize
  \centering
  $\parallelogramthreefold{0^*}{1}{7}{0}{0}{17}$
  {Polyvectors \\ of \MM{3}{12}}
\end{minipage}\vspace{0.5cm}
\begin{minipage}{.166\linewidth}
  \footnotesize
  \centering
  $\parallelogramthreefold{1^*}{1}{9}{0}{0}{18}$
  {Polyvectors \\ of \MM{3}{13}}
\end{minipage}\begin{minipage}{.166\linewidth}
  \footnotesize
  \centering
  $\parallelogramthreefold{1}{1}{12}{0}{0}{19}$
  {Polyvectors \\ of \MM{3}{14}}
\end{minipage}\begin{minipage}{.166\linewidth}
  \footnotesize
  \centering
  $\parallelogramthreefold{1}{0}{11}{0}{0}{19}$
  {Polyvectors \\ of \MM{3}{15}}
\end{minipage}\begin{minipage}{.166\linewidth}
  \footnotesize
  \centering
  $\parallelogramthreefold{2}{0}{13}{0}{0}{20}$
  {Polyvectors \\ of \MM{3}{16}}
\end{minipage}\begin{minipage}{.166\linewidth}
  \footnotesize
  \centering
  $\parallelogramthreefold{3}{0}{15}{0}{0}{21}$
  {Polyvectors \\ of \MM{3}{17}}
\end{minipage}\begin{minipage}{.166\linewidth}
  \footnotesize
  \centering
  $\parallelogramthreefold{3}{0}{15}{0}{0}{21}$
  {Polyvectors \\ of \MM{3}{18}}
\end{minipage}\vspace{0.5cm}
\begin{minipage}{.166\linewidth}
  \footnotesize
  \centering
  $\parallelogramthreefold{4}{0}{17}{0}{0}{22}$
  {Polyvectors \\ of \MM{3}{19}}
\end{minipage}\begin{minipage}{.166\linewidth}
  \footnotesize
  \centering
  $\parallelogramthreefold{4}{0}{17}{0}{0}{22}$
  {Polyvectors \\ of \MM{3}{20}}
\end{minipage}\begin{minipage}{.166\linewidth}
  \footnotesize
  \centering
  $\parallelogramthreefold{4}{0}{17}{0}{0}{22}$
  {Polyvectors \\ of \MM{3}{21}}
\end{minipage}\begin{minipage}{.166\linewidth}
  \footnotesize
  \centering
  $\parallelogramthreefold{5}{0}{19}{0}{0}{23}$
  {Polyvectors \\ of \MM{3}{22}}
\end{minipage}\begin{minipage}{.166\linewidth}
  \footnotesize
  \centering
  $\parallelogramthreefold{6}{0}{21}{0}{0}{24}$
  {Polyvectors \\ of \MM{3}{23}}
\end{minipage}\begin{minipage}{.166\linewidth}
  \footnotesize
  \centering
  $\parallelogramthreefold{6}{0}{21}{0}{0}{24}$
  {Polyvectors \\ of \MM{3}{24}}
\end{minipage}\vspace{0.5cm}
\begin{minipage}{.166\linewidth}
  \footnotesize
  \centering
  $\parallelogramthreefold{7}{0}{23}{0}{0}{25}$
  {Polyvectors \\ of \MM{3}{25}}
\end{minipage}\begin{minipage}{.166\linewidth}
  \footnotesize
  \centering
  $\parallelogramthreefold{8}{0}{25}{0}{0}{26}$
  {Polyvectors \\ of \MM{3}{26}}
\end{minipage}\begin{minipage}{.166\linewidth}
  \footnotesize
  \centering
  $\parallelogramthreefold{9}{0}{27}{0}{0}{27}$
  {Polyvectors \\ of \MM{3}{27}}
\end{minipage}\begin{minipage}{.166\linewidth}
  \footnotesize
  \centering
  $\parallelogramthreefold{9}{0}{27}{0}{0}{27}$
  {Polyvectors \\ of \MM{3}{28}}
\end{minipage}\begin{minipage}{.166\linewidth}
  \footnotesize
  \centering
  $\parallelogramthreefold{10}{0}{29}{0}{0}{28}$
  {Polyvectors \\ of \MM{3}{29}}
\end{minipage}\begin{minipage}{.166\linewidth}
  \footnotesize
  \centering
  $\parallelogramthreefold{10}{0}{29}{0}{0}{28}$
  {Polyvectors \\ of \MM{3}{30}}
\end{minipage}\vspace{0.5cm}
\begin{minipage}{.166\linewidth}
  \footnotesize
  \centering
  $\parallelogramthreefold{11}{0}{31}{0}{0}{29}$
  {Polyvectors \\ of \MM{3}{31}}
\end{minipage}\end{flushleft}

\clearpage
\paragraph{Rank 4}
\begin{flushleft}
\begin{minipage}{.166\linewidth}
  \footnotesize
  \centering
  $\parallelogramthreefold{0}{3}{3}{0}{0}{15}$
  {Polyvectors \\ of \MM{4}{1}}
\end{minipage}\begin{minipage}{.166\linewidth}
  \footnotesize
  \centering
  $\parallelogramthreefold{1}{2}{7}{0}{0}{17}$
  {Polyvectors \\ of \MM{4}{2}}
\end{minipage}\begin{minipage}{.166\linewidth}
  \footnotesize
  \centering
  $\parallelogramthreefold{1}{0}{8}{0}{0}{18}$
  {Polyvectors \\ of \MM{4}{3}}
\end{minipage}\begin{minipage}{.166\linewidth}
  \footnotesize
  \centering
  $\parallelogramthreefold{2}{0}{10}{0}{0}{19}$
  {Polyvectors \\ of \MM{4}{4}}
\end{minipage}\begin{minipage}{.166\linewidth}
  \footnotesize
  \centering
  $\parallelogramthreefold{2}{0}{10}{0}{0}{19}$
  {Polyvectors \\ of \MM{4}{5}}
\end{minipage}\begin{minipage}{.166\linewidth}
  \footnotesize
  \centering
  $\parallelogramthreefold{3}{0}{12}{0}{0}{20}$
  {Polyvectors \\ of \MM{4}{6}}
\end{minipage}\vspace{0.5cm}
\begin{minipage}{.166\linewidth}
  \footnotesize
  \centering
  $\parallelogramthreefold{4}{0}{14}{0}{0}{21}$
  {Polyvectors \\ of \MM{4}{7}}
\end{minipage}\begin{minipage}{.166\linewidth}
  \footnotesize
  \centering
  $\parallelogramthreefold{5}{0}{16}{0}{0}{22}$
  {Polyvectors \\ of \MM{4}{8}}
\end{minipage}\begin{minipage}{.166\linewidth}
  \footnotesize
  \centering
  $\parallelogramthreefold{6}{0}{18}{0}{0}{23}$
  {Polyvectors \\ of \MM{4}{9}}
\end{minipage}\begin{minipage}{.166\linewidth}
  \footnotesize
  \centering
  $\parallelogramthreefold{7}{0}{20}{0}{0}{24}$
  {Polyvectors \\ of \MM{4}{10}}
\end{minipage}\begin{minipage}{.166\linewidth}
  \footnotesize
  \centering
  $\parallelogramthreefold{8}{0}{22}{0}{0}{25}$
  {Polyvectors \\ of \MM{4}{11}}
\end{minipage}\begin{minipage}{.166\linewidth}
  \footnotesize
  \centering
  $\parallelogramthreefold{9}{0}{24}{0}{0}{26}$
  {Polyvectors \\ of \MM{4}{12}}
\end{minipage}\vspace{0.5cm}
\begin{minipage}{.166\linewidth}
  \footnotesize
  \centering
  $\parallelogramthreefold{0^*}{1}{4}{0}{0}{16}$
  {Polyvectors \\ of \MM{4}{13}}
\end{minipage}\end{flushleft}

\clearpage
\paragraph{Rank 5}
\begin{flushleft}
\begin{minipage}{.166\linewidth}
  \footnotesize
  \centering
  $\parallelogramthreefold{1}{0}{5}{0}{0}{17}$
  {Polyvectors \\ of \MM{5}{1}}
\end{minipage}\begin{minipage}{.166\linewidth}
  \footnotesize
  \centering
  $\parallelogramthreefold{5}{0}{13}{0}{0}{21}$
  {Polyvectors \\ of \MM{5}{2}}
\end{minipage}\begin{minipage}{.166\linewidth}
  \footnotesize
  \centering
  $\parallelogramthreefold{5}{0}{13}{0}{0}{21}$
  {Polyvectors \\ of \MM{5}{3}}
\end{minipage}\end{flushleft}

\paragraph{Rank 6}
\begin{flushleft}
\begin{minipage}{.166\linewidth}
  \footnotesize
  \centering
  $\parallelogramthreefold{3}{0}{6}{0}{0}{18}$
  {Polyvectors \\ of \MM{6}{1}}
\end{minipage}\end{flushleft}

\paragraph{Rank 7}
\begin{flushleft}
\begin{minipage}{.166\linewidth}
  \footnotesize
  \centering
  $\parallelogramthreefold{3}{2}{5}{6}{0}{15}$
  {Polyvectors \\ of \MM{7}{1}}
\end{minipage}\end{flushleft}

\paragraph{Rank 8}
\begin{flushleft}
\begin{minipage}{.166\linewidth}
  \footnotesize
  \centering
  $\parallelogramthreefold{3}{4}{4}{12}{0}{12}$
  {Polyvectors \\ of \MM{8}{1}}
\end{minipage}\end{flushleft}

\paragraph{Rank 9}
\begin{flushleft}
\begin{minipage}{.166\linewidth}
  \footnotesize
  \centering
  $\parallelogramthreefold{3}{6}{3}{18}{0}{9}$
  {Polyvectors \\ of \MM{9}{1}}
\end{minipage}\end{flushleft}

\paragraph{Rank 10}
\begin{flushleft}
\begin{minipage}{.166\linewidth}
  \footnotesize
  \centering
  $\parallelogramthreefold{3}{8}{2}{24}{0}{6}$
  {Polyvectors \\ of \MM{10}{1}}
\end{minipage}\end{flushleft}

\restoregeometry

\section{Dimensions for del Pezzo surfaces}
\label{section:surfaces}
For ease of reference and completeness' sake we give the Hochschild--Kostant--Rosenberg decomposition of the Hochschild cohomology of del Pezzo surfaces. We use the notation introduced in \eqref{equation:parallelogram}, suitably modified for surfaces.

One can compute this in many ways, e.g.~using the methods from \cref{subsection:general-results}. There is no cohomology jumping in this case. What is interesting to remark is that one does observe cohomology jumping when extending to noncommutative del Pezzo surfaces, as in \cite{MR3988086}.

\begin{flushleft}
\begin{minipage}{.2\linewidth}
  \footnotesize
  \centering
  $\parallelogramsurface{8}{0}{10}$

  {Polyvectors \\ of $\mathbb{P}^2$}
\end{minipage}\begin{minipage}{.2\linewidth}
  \footnotesize
  \centering
  $\parallelogramsurface{6}{0}{9}$

  {Polyvectors \\ of $\mathbb{P}^1\times\mathbb{P}^1$}
\end{minipage}\begin{minipage}{.2\linewidth}
  \footnotesize
  \centering
  $\parallelogramsurface{6}{0}{9}$

  {Polyvectors \\ of $\Bl_1\mathbb{P}^2$}
\end{minipage}\begin{minipage}{.2\linewidth}
  \footnotesize
  \centering
  $\parallelogramsurface{4}{0}{8}$

  {Polyvectors \\ of $\Bl_2\mathbb{P}^2$}
\end{minipage}\begin{minipage}{.2\linewidth}
  \footnotesize
  \centering
  $\parallelogramsurface{2}{0}{7}$

  {Polyvectors \\ of $\Bl_3\mathbb{P}^2$}
\end{minipage}\vspace{0.5cm}
\begin{minipage}{.2\linewidth}
  \footnotesize
  \centering
  $\parallelogramsurface{0}{0}{6}$

  {Polyvectors \\ of $\Bl_4\mathbb{P}^2$}
\end{minipage}\begin{minipage}{.2\linewidth}
  \footnotesize
  \centering
  $\parallelogramsurface{0}{2}{5}$

  {Polyvectors \\ of $\Bl_5\mathbb{P}^2$}
\end{minipage}\begin{minipage}{.2\linewidth}
  \footnotesize
  \centering
  $\parallelogramsurface{0}{4}{4}$

  {Polyvectors \\ of $\Bl_6\mathbb{P}^2$}
\end{minipage}\begin{minipage}{.2\linewidth}
  \footnotesize
  \centering
  $\parallelogramsurface{0}{6}{3}$

  {Polyvectors \\ of $\Bl_7\mathbb{P}^2$}
\end{minipage}\begin{minipage}{.2\linewidth}
  \footnotesize
  \centering
  $\parallelogramsurface{0}{8}{2}$

  {Polyvectors \\ of $\Bl_8\mathbb{P}^2$}
\end{minipage}\end{flushleft}
 
\newpage

\printbibliography

\end{document}